\def\versionno{ FRDF     -- version  4.1 -- by jf --  29.1.09   }
\newif\if@fewtab\@fewtabtrue
\xdef\hourmin{\number\count255}
\xdef\hourmin{\hourmin:\ifnum\count255<10 0\fi\the\count255}}
\def\ps@draft{\let\@mkboth\@gobbletwo
    \def\@oddfoot{\hbox to 7 cm{\tiny \versionno
       \hfil}\hskip -7cm\hfil\rm\thepage \hfil {\tiny\draftdate}}
    \def\@oddhead{}
    \def\@evenhead{}\let\@evenfoot\@oddfoot}
\def\draftdate{\number\month/\number\day/\number\year\ \ \ \hourmin }
\def\citen#1{\if@filesw \immediate\write \@auxout {\string\citation{#1}}\fi%
\@tempcntb\m@ne \let\@h@ld\relax \def\@citea{}%
\@for \@citeb:=#1\do {\@ifundefined {b@\@citeb}%
    {\@h@ld\@citea\@tempcntb\m@ne{\bf ?}%
    \@warning {Citation `\@citeb ' on page \thepage \space undefined}}%
    {\@tempcnta\@tempcntb \advance\@tempcnta\@ne
    \setbox\z@\hbox\bgroup\ifcat0\csname b@\@citeb \endcsname \relax
    \egroup \@tempcntb\number\csname b@\@citeb \endcsname \relax
    \else \egroup \@tempcntb\m@ne \fi \ifnum\@tempcnta=\@tempcntb
    \ifx\@h@ld\relax \edef \@h@ld{\@citea\csname b@\@citeb\endcsname}%
    \else \edef\@h@ld{\hbox{--}\penalty\@highpenalty
    \csname b@\@citeb\endcsname}\fi
    \else \@h@ld\@citea\csname b@\@citeb \endcsname \let\@h@ld\relax \fi}%
\def\@citea{,\penalty\@highpenalty\hskip.13em plus.13em minus.13em}}\@h@ld}
\def\@citex[#1]#2{\@cite{\citen{#2}}{#1}}%
\def\@cite#1#2{\leavevmode\unskip\ifnum\lastpenalty=\z@\penalty\@highpenalty\fi%
  \ [{\multiply\@highpenalty 3 #1%
  \if@tempswa,\penalty\@highpenalty\ #2\fi}]}   %
\newcommand\ad[1]  {\mathrm{ad}_{#1}}
\def\auto          {automorphism}
\def\ao            {A_\circ}
\def\av            {A^\vee}
\def\be            {\begin{equation}}
\def\bearl         {\begin{array}{l}}
\def\bearll        {\begin{array}{ll}}
\def\C             {\ensuremath{\mathcal C}}
\def\cfts          {conformal field theories}
\def\cir           {\,{\circ}\,}
\def\conp          {\heartsuit}
\def\ee            {\end{equation}}
\def\eear          {\end{array}}
\newcommand\eew[1] {{}^{\wedge}{#1}}
\newcommand\eeW[1] {{}^{\wedge\!}{#1}}
\newcommand\eEW[1] {{}^{\wedge}{\!#1}}
\def\End           {\mathrm{End}}
\def\eps           {\varepsilon}
\def\eq            {\,{=}\,}
\newcommand\eqpic[4]{\begin{eqnarray}
                   \begin{picture}(#2,#3){}\end{picture}\nonumber\\
                   \raisebox{-#3pt}{ \begin{picture}(#2,#3) #4 \end{picture} }
                   \label{#1} \\~\nonumber \end{eqnarray} }
\newcommand\eqpid[4]{\begin{eqnarray}
                   \begin{picture}(#2,#3){}\end{picture}\nonumber\\
                   \raisebox{-#3pt}{ \begin{picture}(#2,#3) #4 \end{picture} }
                   \nonumber \\~\label{#1} \end{eqnarray} }
\newcommand\erf[1] {(\ref{#1})}
\def\Fa            {\FRO\ algebra}
\def\FAs           {$(\Delta,\eps)$-\FRO\ structure}
\def\FBs           {$\kap$-\FRO\ structure}
\def\FCs           {$\irl$-\FRO\ structure}
\def\FRO           {Fro\-be\-ni\-us}
\def\Hom           {\mathrm{Hom}}
\def\id            {\mbox{\sl id}}
\def\ida           {\id_{A}}
\def\idav          {\id_{A^\vee_{\phantom:}}}
\def\idva          {\id_{{}^\vee_{\phantom:}\!\!A}}
\def\ikel          {\Phi_{\kap_\eps,\rm l}}
\def\iker          {\Phi_{\kap_\eps,\rm r}}
\def\ikl           {\Phi_{\kap,\rm l}}
\def\iklm          {\Phi_{\kap,\rm l}^{-1}}
\def\ikpl          {\Phi_{\kap'\!,\rm l}}
\def\ikpr          {\Phi_{\kap'\!,\rm r}}
\def\ikprm         {\Phi_{\kap'\!,\rm r}^{-1}}
\def\ikr           {\Phi_{\kap,\rm r}}
\def\ikrm          {\Phi_{\kap,\rm r}^{-1}}
\def\iN            {\,{\in}\,} 
\newcommand\Includepicffrs[1] {{\begin{picture}(0,0)(0,0)
                   \scalebox{.38}{\includegraphics{imgs/pic_ffrs_#1.eps}}\end{picture}}}
\newcommand\IncludepicfuRsXII[1] {{\begin{picture}(0,0)(0,0)
                   \scalebox{.38}{\includegraphics{imgs/pic_fuRsXII_#1.eps}}\end{picture}}}
\newcommand\IncludepicfuSt[1] {{\begin{picture}(0,0)(0,0)
                   \scalebox{.38}{\includegraphics{imgs/pic_fuSt_#1.eps}}\end{picture}}}
\newcommand\IncludepicjfXXIX[1] {{\begin{picture}(0,0)(0,0)
                   \scalebox{.38}{\includegraphics{imgs/pic_jfXXIX_#1.eps}}\end{picture}}}
\def\irl           {{\Phi_{\!\rhol}}}
\def\irlm          {\Phi_{\!\rhol}^{-1}}
\def\irr           {\Phi_{\!\rhors}}
\def\kap           {\kappa} 
\newcommand\labl[1]{\label{#1}\ee}
\def\lhs           {left hand side}
\newcommand\lo[1]  {\ell_{#1}}
\newcommand\lom[1] {\ell_{#1^{-1}_{}}^{}}
\def\mo            {m_\circ}
\def\naka          {\mho}
\def\Naka          {Nakayama automorphism}
\def\nake          {\mho_\eps}
\def\nondeg        {non-de\-ge\-ne\-ra\-te}
\def\obj           {{\mathcal O}bj}
\def\one           {{\ensuremath{\mathbf 1}}}
\def\oti           {\,{\otimes}\,}
\def\Oti           {{\otimes}}
\def\phil          {\ikel}  
\def\phir          {\iker}  
\long\def\query#1{\hskip 0pt{\vadjust{\everypar={}\small\vtop to 0pt{\hbox{}%
     \vskip -13pt\rlap{\hbox to 49.0pc{\hfil{\vtop{\hsize=8pc\tolerance=6000%
     \hfuzz=.5pc\rightskip=0pt plus 5.5em\noindent#1}}}}\vss}}}}%
\def\rep           {representation}
\def\rhol          {\rho}
\def\rhor          {{\reflectbox{$\rho$}}}
\def\rhors         {{\reflectbox{$\scriptstyle\rho$}}}
\def\rhs           {right hand side}
\newcommand\ro[1]  {r_{\!#1}}
\newcommand\rom[1] {r_{\!#1^{-1}_{}}^{}}
\newcommand\romv[1]{r_{\!#1^{-1}_{}}^{\,\vee}}
\def\sigl          {\sigma_{\rm l}^{}}
\def\sigL          {\sigma_{\rm l}}
\def\sigr          {\sigma_{\rm r}}
\def\sse           {\scriptsize}
\def\Times         {\,{\times}\,}
\def\To            {\,{\to}\,} 
\def\ua            {\ao^\times}
\def\va            {\hspace*{-.8em}{\phantom A}^{\vee\!}\!A}
\newcommand\void[1]{}
\def\Vee           {^{\vee}}
\newcommand\wee[1] {{#1}^{\wedge}}
\newtheorem{thm}{Theorem}
\newtheorem{lemma}[thm]{Lemma}
\newtheorem{prop}[thm]{Proposition}
\newtheorem{cor}[thm]{Corollary}
\theoremstyle{definition}
\newtheorem{rem}[thm]{Remark}
\newtheorem{Def}[thm]{Definition}
\begin{document}

  {~} \\[-14mm]
\begin{flushright}
{Arabian Journal for Science and Engineering\\ 33-2C (2008) 175\,--\,191} 
\end{flushright}
\vskip 9mm ~\\[1em]~ 
\begin{center}
{\large\bf ON FROBENIUS ALGEBRAS\\[6pt] IN RIGID MONOIDAL CATEGORIES}
\\[14mm]
{\large J\"urgen Fuchs}~~~and~~~{\large Carl Stigner}
\\[6mm]
Teoretisk fysik, \ Karlstads Universitet\\
Universitetsgatan, \ S\,--\,651\,88\, Karlstad
\end{center}
\vskip 29mm

\begin{quote}{\bf Abstract}\\[1mm]
We show that the equivalence between several possible characterizations 
of Frobenius algebras, and of symmetric Frobenius algebras, carries over 
from the category of vector spaces to more general monoidal categories. 
For Frobenius algebras, the appropriate setting is the one of rigid
monoidal categories, and for symmetric Frobenius algebras it is the one of
sovereign monoidal categories. We also discuss 
some properties of Nakayama automorphisms.
\\[1em]
Mathematics Subject Classification (2000): 16B50, 18D10, 18D35 
\end{quote}

 \newpage

\section{Introduction}

Frobenius algebras in monoidal categories play a significant role
in diverse contexts. Illustrative examples are the study of
weak Morita equivalences of tensor categories \cite{muge8},
certain correspondences of ribbon categories which give e.g.\ rise to 
the notion of trivializability of a ribbon category \cite{ffrs},
the computation of correlation functions in conformal quantum field theory
\cite{fuRs,fjfrs,scfr2}, the analysis of braided crossed G-categories
\cite{muge13}, the theory of subfactors and of extensions of $C^*$-algebras
\cite{loro,evpi}, invariants of three-dimensional membranes \cite{laud3},
reconstruction theorems for modular tensor categories \cite{pfei5},
and a categorical version of Militaru's D-equation \cite{biStr}.

In the classical case of algebras in the category of vector spaces over a field 
or commutative ring, several equivalent characterizations of Frobenius algebras 
are in use, see e.g.\ \cite[Thm.\,61.3]{CUre}. The most common 
ones are via the existence of an isomorphism between an algebra and its dual
as modules, or via the existence of a \nondeg\ invariant bilinear form.
A more recent description is via the existence of a coalgebra structure with
appropriate compatibility properties \cite{abra,quin}.
Given such a characterization, there will be an analogous notion of
Frobenius algebra in other categories that are monoidal and are equipped
with sufficiently much additional structure.
In view of the applications mentioned above, it is important to know whether
the equivalence between different possible definitions persists in this more
general situation.

Here we establish the equivalence between categorical versions of the three
characterizations of Frobenius algebras just quoted, for the case that the monoidal
category considered is also rigid,\,%
 \footnote{~The requirements that the categories are rigid, respectively 
 sovereign, are not strictly necessary. For details see the Remarks
 \ref{rem1} and \ref{rem2} below.}
i.e.\ has left and right dualities.
Analogous, and more extensive, results have been obtained for Frobenius algebras
in any category of bimodules over a ring (Frobenius extensions) in \cite{KAdi},
and for Frobenius monads, i.e.\ Frobenius algebras in categories of endofunctors 
\cite{lawv3}, in \cite{stre8};
for commutative Frobenius algebras in compact closed categories some of the
results can also be found in \cite{stri3}.
In the vector space case there also exist other characterizations of Frobenius
algebras, such as via ideals and their annihilators (see e.g.\ Theorem 16.40 of 
\cite{LAm}); the study of their categorical versions is beyond the scope of this
note.

Besides being rigid monoidal, no other properties are assumed for the categories
in which these issues are studied. For instance, they need not be linear or 
abelian and need not have direct sums;\,%
  \footnote{~%
  Many, but not all rigid monoidal categories can be embedded as a full
  subcategory in a category of bimodules over a ring, in which case one is
  in the situation studied in \cite{KAdi}.
  Frobenius algebras not covered by this setting are obtained when the category
  does not possess all the generic properties that such full subcategories
  inherit from the category of bimodules.
  An example for a rigid monoidal category which is additive, but not abelian,
  is the category of finitely generated projective modules over a unital
  commutative ring, see e.g.\ \cite[p.\,25]{TUra}. For an example which does
  not admit direct sums, see e.g.\ \cite[p.\,29]{TUra}.
  And a rigid monoidal category that is not even preadditive, with the
  morphism sets not possessing any structure beyond being sets, is the
  two-dimensional cobordism category, whose objects are oriented one-manifolds
  and whose morphisms are cobordisms; a Frobenius algebra in this category is
  given by the circle, with the structural morphisms
  (product, coproduct, unit, counit) just being the elementary cobordisms
  \cite{laud3}. }
but of course, in many applications they do have additional structure, like
being ribbon categories \cite{TUra} or (multi-)fusion categories \cite{etno}.
In any rigid monoidal category there is an abundance of Frobenius algebras:
for any object $X$, the objects $X\oti X^\vee$ and ${}^\vee\!{X}\oti X$ carry
a natural structure of Frobenius algebra, with the structural morphisms
being expressible through the evaluation and coevaluation morphisms.
Examples for Frobenius algebras of this type are star-autonomous monoidal
categories, regarded as objects in the monoidal bicategory $C\!${\sl at} of
small categories, see \cite[Cor.\,3.3]{stre8}.
These Frobenius algebras are in fact all Morita equivalent to the tensor unit.
A generic source for more general Frobenius algebras is provided by monoidal
categories with nontrivial Picard group; Frobenius algebras that correspond to
subgroups of the Picard group can be classified with the help of abelian group 
cohomology \cite{fuRs9}.

We also discuss the equivalence between categorical versions of the 
notion of symmetric \Fa. For formulating these concepts, we require\,$^1$ 
that the categories in question are rigid monoidal and in addition sovereign,
i.e.\ that the left- and right-duality functors coincide.
Afterwards we introduce the notion of Nakayama automorphisms and study some of 
their properties. We show e.g.\ that a \Fa\ is symmetric iff its \Naka s are
inner \auto s. We can then finally expose the relation between any two 
Frobenius structures on an algebra in a rigid monoidal category.

The three notions of \Fa\ are presented in Section 2, and their equivalence is
proven in Section 3. Section 4 is devoted to an analogous discussion of 
symmetric \Fa s. Nakayama automorphisms are studied in Section 5.



\section{Frobenius algebras}

\subsubsection*{Algebras and coalgebras in monoidal categories}

Let $\C\eq(\C,\otimes,\one)$ be a monoidal category.
Without loss of generality we assume \C\ to be strict.
A (unital, associative) {\em algebra\/} (or monoid) $A\eq (A,m,\eta)$ in
\C\ consists of an object $A\iN\obj(\C)$ and morphisms 
$m \iN \Hom(A\oti A,A)$ and $\eta \iN \Hom(\one,A)$ satisfying 
$m\cir(m\oti\ida) \eq m\cir(\ida\oti m)$ and
$m\cir(\eta\oti\ida) \eq \ida \eq m\cir(\ida\oti\eta)$.

We will freely use the graphical notation for morphisms of strict
monoidal categories as described e.g.\ in \cite{joSt6,MAji,KAss} and
\cite{ffrs,fjfrs}. Thus we write
  \eqpic{} {400} {28} {
  \put(20,0)     {\IncludepicfuRsXII{a01}} 
  \put(-23,30)   {$ \id_U^{}\,= $}
  \put(18.0,-8.8){\sse$ U $}
  \put(18.5,65.5){\sse$ U $}
\put(87,0){
  \put(20,0)     {\IncludepicfuRsXII{a02}}
  \put(-16,30)   {$ f~= $}
  \put(25.3,-8.8){\sse$ U $}
  \put(26.2,65.5){\sse$ V $}
  \put(26.6,30.2){\sse$ f $}
}
\put(204,0){
  \put(20,0)     {\IncludepicfuRsXII{a03}}
  \put(-31,30)   {$ g\cir f~= $}
  \put(25.1,69.5){\sse$ W $}
  \put(25.3,-8.8){\sse$ U $}
  \put(26.6,17.6){\sse$ f $}
  \put(26.6,49.5){\sse$ g $}
  \put(30.5,32.9){\sse$ V $}
}
\put(330,0){
  \put(20,0)     {\IncludepicfuRsXII{a04}}
  \put(-40,30)   {$ f\oti f'~= $}
  \put(25.3,-8.8){\sse$ U $}
  \put(26.2,65.5){\sse$ V $}
  \put(26.6,29.4){\sse$ f $}
  \put(49.1,-8.8){\sse$ U' $}
  \put(50.0,65.5){\sse$ V' $}
  \put(49.6,29.4){\sse$ f' $}
} }
for identity morphisms, general morphisms $f \iN \Hom(U,V)$, 
and for composition and tensor product of morphisms of \C\
(all pictures are to be read from bottom to top), as well as
  \eqpic{a21} {150} {15} {
\put(10,-2){
  \put(10,0)     {\IncludepicfuRsXII{a21}}
  \put(-29,20)   {$ m~= $}
  \put(6.0,-8.8) {\sse$ A $}
  \put(22.5,46.5){\sse$ A $}
  \put(36.0,-8.8){\sse$ A $}
}
\put(140,4){
  \put(10,0)     {\IncludepicfuRsXII{a22}}
  \put(-25,14)   {$ \eta~= $}
  \put(9.5,32.2) {\sse$ A $}
} }
for the product and unit morphisms of an algebra $A$ (note that the
morphism $\id_\one$ is `invisible', owing to strictness of \C).
The defining properties of $A$ then read
  \eqpic{jf29-07} {340} {24} {
\put(0,3){
  \put(25,0)     {\IncludepicjfXXIX{07}}
  \put(20.8,-9.7){\sse$ A $}
  \put(43.3,-9.7){\sse$ A $}
  \put(45.0,53.8){\sse$ A $}
  \put(66.2,-9.7){\sse$ A $}
  \put(80.2,23)  {$=$}
  \put(95.3,-9.7){\sse$ A $}
  \put(118,-9.7){\sse$ A $}
  \put(119.5,53.8){\sse$ A $}
  \put(140.7,-9.7){\sse$ A $}
  \put(207.2,54.8){\sse$ A $}
  \put(221.2,-8.7){\sse$ A $}
  \put(233.2,23)  {$=$}
  \put(249.6,-8.7){\sse$ A $}
  \put(250.8,54.8){\sse$ A $}
  \put(265.4,23)  {$=$}
  \put(278.9,-8.7){\sse$ A $}
  \put(295.1,54.8){\sse$ A $}
} }

We will also need the dual notion of a (coassociative, counital) coalgebra in 
\C. This is a triple $(C,\Delta,\eps)$ consisting of an object $C$ and morphisms
  \eqpic{a23} {140} {15} {
\put(20,-3){
  \put(10,0)     {\IncludepicfuRsXII{a23}}
  \put(-29,20)   {$ \Delta~= $}
  \put(7.5,46.5) {\sse$ C $}
  \put(21.3,-8.8){\sse$ C $}
  \put(37.6,46.5){\sse$ C $}
}
\put(134,4){
  \put(10,5)     {\IncludepicfuRsXII{a24}}
  \put(-25,13.2)   {$ \eps~= $}
  \put(8.1,-3.6) {\sse$ C $}
} }
satisfying
  \eqpic{app63,app64} {340} {22} {
\put(20,2){
  \put(10,0)     {\Includepicffrs{app63}}
\put(7.8,60.4)   {\sse$C$}
\put(28.7,-9.2)  {\sse$C$}
\put(30.3,60.4)  {\sse$C$}
\put(53.0,60.4)  {\sse$C$}
\put(64.5,28)    {\small$=$}
\put(82.0,60.4)  {\sse$C$}
\put(103.1,-9.2) {\sse$C$}
\put(104.6,60.4) {\sse$C$}
\put(127.3,60.4) {\sse$C$}
}
\put(184,0){
  \put(10,0)     {\Includepicffrs{app64}}
\put(23.2,-9.2)  {\sse$C$}
\put(39.8,65.4)  {\sse$C$}
\put(50.5,28)    {\small$=$}
\put(66.5,-9.2)  {\sse$C$}
\put(68.2,65.4)  {\sse$C$}
\put(81.5,28)    {\small$=$}
\put(97.5,65.4)  {\sse$C$}
\put(110.9,-9.2) {\sse$C$}
} }
For more details, see e.g.\ appendix A of \cite{ffrs}.

\subsubsection*{Three definitions of Frobenius algebras}

The classical characterizations of \Fa s mentioned in the introduction
require the {\em existence\/} of some extra structure -- an isomorphism of
$A$-modules, a bilinear form, or a coalgebra structure -- for a given algebra.
In the Definitions \ref{FA}, \ref{FB} and \ref{FC} below we prefer to specify 
instead a choice of the relevant structure explicitly; this will allow 
us to present our arguments in a somewhat more direct manner. A formulation in 
terms of the existence of the extra structures is given in Definition \ref{FRO},
after the equivalence of the three definitions has been established.

There is one notion of Frobenius algebra in a category \C\ that does not 
require any further structure on \C\ beyond what is needed to define algebras,
i.e.\ monoidality:

\begin{Def}\label{FA}
A {\em \FAs\/} on an algebra $(A,m,\eta)$ in a monoidal category
\C\ is a pair $(\Delta,\eps)$ of morphisms such that $(A,\Delta,\eps)$ is a 
coalgebra and the coproduct $\Delta$ is a morphism of $A$-bimodules.
\end{Def}

Here the bimodule structures on $A$ and on $A\oti A$ are the obvious ones
for which the left and right \rep\ morphisms are furnished by the product $m$.
In pictures, the bimodule morphism property of $\Delta$ reads
  \eqpic{picfrob} {280} {31} {
\put(20,-1){
  \put(10,0)     {\Includepicffrs{app60}}
\put( 7.3,76.9)  {\sse$A$}
\put(20.5,-9.2)  {\sse$A$}
\put(48.5,76.9)  {\sse$A$}
\put(61.6,-9.2)  {\sse$A$}
\put(83.5,32)    {\small$=$}
\put(104.6,-9.2) {\sse$A$}
\put(105.4,76.9) {\sse$A$}
\put(132.0,-9.2) {\sse$A$}
\put(132.8,76.9) {\sse$A$}
\put(152.5,32)   {\small$=$}
\put(174.5,-9.2) {\sse$A$}
\put(189.0,76.9) {\sse$A$}
\put(215.8,-9.2) {\sse$A$}
\put(230.4,76.9) {\sse$A$}
} }
Note that, unlike in the case of bialgebras (which can be defined in any
braided monoidal category), neither the coproduct $\Delta$
nor the counit $\eps$ is an algebra morphism.

\begin{rem}
The characterization of \Fa s given in Definition \ref{FA} is e.g.\ used in 
\cite{fuRs,muge8,biStr} and implicitly in \cite{kios}.
Besides requiring nothing else than monoidality of \C, it has proved to be 
convenient for performing graphical calculations, and has been excessively used 
for this purpose in e.g.\ \cite{fjfrs,ffrs,fuRs11}. Also, it is this definition
that can readily be generalized to so-called non-compact Frobenius algebras, 
which have recently been discussed in the context of string topology 
\cite{glsux,chMe}.
Alternative descriptions close to the one in Definition \ref{FA} have been 
discussed, and been shown to be equivalent to it, in \cite{laud3}.
\\
The Definition \ref{FA} has been given in \cite{abra} for the category of vector
spaces over a commutative ring, and in \cite{stri3} for compact closed (and
thus in particular symmetric monoidal) categories. More precisely, in 
\cite{abra,stri3} it is further
assumed that $A$ is commutative; as a consequence, the requirement
that the coproduct is a bimodule morphism is equivalent to requiring that it
is a morphism of left (or right) modules. A similar definition, again for vector
spaces, is given in \cite[app.\,A.3]{quin}, where in addition to the bimodule
morphism property of $\Delta$ it is required that the morphism $\eps\cir m$ is 
symmetric; in \cite{quin} the resulting structure is called an {\em ambialgebra\/}.
In the present setting no extra properties are imposed; thus in particular there
is no need that the category \C\ is symmetric, nor even braided.
\end{rem}

The next definition generalizes the familiar one in terms of a bilinear form.
To formulate it, we will have to assume that \C\ is in addition 
{\em rigid\/}, i.e.\ has left- and right-duality endofunctors. We denote the left
and right dual of an object $U$ by $\Vee U$ and $U^\vee$, respectively,\,%
  \footnote{~In our conventions we follow \cite{fuRs}; in most of the literature,
  what we refer to as a left duality is called a right duality, and vice versa.}
and the corresponding evaluation and coevaluation morphisms by
  \eqpic{pic_navf_b} {360} {19} {
\put(0,0){
  \put(10,0)     {\IncludepicfuRsXII{a15}} 
  \put(-32,22)   {$ b_U~= $}
  \put(8.3,47.3) {\sse$ U $}
  \put(33.9,47.3){\sse$ U^\vee $}
}
\put(110,6){
  \put(10,0)     {\IncludepicfuRsXII{a16}} 
  \put(-32,16)   {$ d_U~= $}
  \put(6.2,-8.2) {\sse$ U^\vee $}
  \put(34.8,-8.2){\sse$ U $}
}
\put(220,0){
  \put(10,0)     {\IncludepicfuRsXII{a17}} 
  \put(-32,22)   {$ \tilde b_{U}^{}~= $}
  \put(-2.4,47.3){\sse$ {\phantom U}^\vee_{}\!{U} $}
  \put(34.5,47.3){\sse$ U $}
}
\put(330,6){
  \put(10,0)     {\IncludepicfuRsXII{a18}} 
  \put(-32,16)   {$ \tilde d_{U}^{}~= $}
  \put(7.8,-8.2) {\sse$ U $}
  \put(24.9,-8.2){\sse$ {\phantom U}^\vee_{}\!{U} $}
} }

In the sequel we will refer to a morphism in $\Hom(A\oti A,\one)$ as a
{\em pairing on\/} $A$.

\begin{Def}\label{FB}
A {\em \FBs\/} on an algebra $(A,m,\eta)$ in a rigid monoidal category \C\ is 
a pairing $\kap\iN\Hom(A\oti A,\one)$ on $A$ that is {\em invariant\/},
i.e.\ satisfies
  \be
  \kap \circ (m\oti\ida) = \kap \circ (\ida\oti m) \,,
  \ee
and that is {\em \nondeg\/} in the sense that
  \be
  (\idva\oti\kap)\circ(\tilde b_A\oti\ida)~\iN\Hom(A,\va)
  \ee
is an isomorphism.
\end{Def}

In pictures, denoting the pairing $\kap$ by
  \eqpic{pic_csp_01} {1} {1} {
\put(0,0){
  \put(-28,8)    {$\kap \,=: $}
  \put(10,0)     {\IncludepicfuSt{01}}
  \put(14,-10)   {\sse$A$}
  \put(25,-10)   {\sse$A$}
} }
the invariance property reads
  \eqpic{pic_csp_02} {140} {18} {
\put(0,0){
  \put(10,0)     {\IncludepicfuSt{02}}
  \put(77,25)    {$=$}
  \put(6.4,-10)  {\sse$A$}
  \put(35.2,-10) {\sse$A$}
  \put(50.9,-10) {\sse$A$}
  \put(102.4,-10){\sse$A$}
  \put(118.2,-10){\sse$A$}
  \put(147.0,-10){\sse$A$}
} }
while the isomorphism featuring in the non-degeneracy property is depicted as 
  \eqpic{pic_csp_03} {80} {26} {
\put(0,-3){
  \put(10,0)     {\IncludepicfuSt{03}}
  \put(66,25)    {$=:\, \ikl$}
  \put(7,69.2)   {\sse$\va$}
  \put(39,-9.5)  {\sse$A$}
} }

We note that instead of $\ikl$ one may as well use the morphism
  \eqpic{pic_csp_04} {300} {25} {
\put(0,-4){
  \put(0,25)     {$ \ikr \,:=\, (\kap\oti\idav)\circ(\ida\oti b_A) \,= $}
  \put(180,0)    {\IncludepicfuSt{04}}
  \put(183,-10)  {\sse$A$}
  \put(216,70)   {\sse$\av$}
  \put(230,25)   {$ \iN\Hom(A,\av) $}
} }
Indeed, we have
  \be
  d_A \circ (\ikr\oti\ida) = \kap = \tilde d_A \circ (\ida\oti\ikl) 
  \labl{d ikr = kap}
and a similar identity involving the coevaluation morphisms
$b_A$ and $\tilde b_A$; as a consequence the invertibility of 
$\ikr$ is equivalent to the invertibility of $\ikl$.

Next we note that for $U,V\iN\obj(\C)$ we can associate to any
$f\iN\Hom(U,{}^{\vee\!}V)$ a morphism
  \be
  \wee f := (\tilde d_V\oti \id_{U^\vee}) \circ
  (\id_V \oti f \oti \id_{U^\vee}) \circ (\id_V \oti b_U)
  \,\iN \Hom(V,U^\vee) \,.
  \labl{wee}
The morphism $\wee f$ is an isomorphism iff $f$ is, in which case its inverse 
is given by $(\wee f)^{-1}\eq (d_U\oti\id_V) \cir(\id_{U^\vee}\oti f^{-1}\oti
  $\linebreak[0]$
\id_V) \cir (\id_{U^\vee} \oti\tilde b_V)$ 
(this shows e.g.\ that an object $U$ of \C\ is isomorphic to $^{\vee\!}U$ iff it
  \pagebreak[0]
is isomorphic to $U^\vee$). Using the notation \erf{wee}, the equality 
  \be
  \ikr = \wee{(\ikl)}
  \labl{ikr=iklwee}
is equivalent to \erf{d ikr = kap}.
For convenience we also note some other immediate consequences of 
\erf{d ikr = kap} and its analogue for $b_A$ and $\tilde b_A$: we have
  $ d_A \cir (\idav\oti\ikl^{-1}) \eq \tilde d_A\cir (\ikr^{-1}\oti\idva)
  \,\iN \Hom(\av\Oti\,\va,\one) $, 
as well as
  \be
  (\ikl^{}\oti\ikrm) \circ b_A = \tilde b_A \qquad{\rm and}\qquad
  d_A \circ (\ikr^{}\oti\iklm) = \tilde d_A \,.
  \labl{ikl x ikrm b = tb} 
Let us display the latter identities also pictorially:
  \eqpic{pic_csp_05} {330} {19} {
\put(0,-11){
  \put(0,0)      {\IncludepicfuSt{05a}}
  \put(0.5,29)   {$\ikl$}
  \put(31.7,29.0)  {$\ikrm$}
  \put(67,28)    {$ = $}
  \put(7.2,61){\sse$\va$}
  \put(38.2,61){\sse$A$}
  \put(90.5,61){\sse$\va$}
  \put(121.5,61){\sse$A$}
  \put(160,28)   {and}
  \put(215,10)   {\IncludepicfuSt{05b}}
  \put(221.6,0)  {\sse$A$}
  \put(251.2,0)  {\sse$\va$}
  \put(304.8,0)  {\sse$A$}
  \put(334.4,0)  {\sse$\va$}
  \put(215.5,32.5) {$\ikr$}
  \put(246.5,32.5) {$\iklm$}
} }

\medskip

Next we note that the left dual $\va$ of $A$ is naturally a left module over 
$A$, while the right dual $\av$ is naturally a right $A$-module.\,%
  \footnote{~%
  In contrast, there is no natural right $A$-module structure on $\va$ unless
  the double dual ${}^{\vee\vee\!}\!A$ is equal to $A$, and similarly for $\av$.}
The corresponding \rep\ morphisms $\rhol\iN\Hom(A\oti\va,\va)$ and
$\rhor\iN\Hom(\av\oti A,\av)$ are given by
  \eqpic{rhol,rhor} {290} {38} {
\put(0,-2){
  \put(0,43)    {$ \rhol \,= $}
  \put(38,0)    {\IncludepicfuSt{10a}}
  \put(180,43)  {$ \rhor \,= $}
  \put(218,0)   {\IncludepicfuSt{10b}}
  \put(35.3,96) {\sse$\va$}
  \put(85.4,-10) {\sse$A$}
  \put(102.8,-10){\sse$\va$}
  \put(214.5,-10){\sse$\av$}
  \put(232.6,-10){\sse$A$}
  \put(282.8,96) {\sse$\av$}
} }
It is therefore natural to wonder whether the isomorphisms \erf{pic_csp_03} and
\erf{pic_csp_04} are compatible with the $A$-module structures of $A$, $\va$ and
$\av$. This leads to the

\begin{Def}\label{FC}
A {\em \FCs\/} on an algebra $A\eq(A,m,\eta)$ in a rigid monoidal category \C\ 
is a left-module isomorphism $\irl\iN\Hom(A,\va)$ between the left $A$-modules
$A\eq(A,m)$ and $\va\eq(\va,\rhol)$.
\end{Def}

We hasten to remark that $\va$ is of course not preferred over $\av$. Indeed we 
have

\begin{lemma}\label{lem1}
An algebra $A\eq(A,m,\eta)$ in a rigid monoidal category \,\C\ is isomorphic to
$(\va,\rhol)$ as a left $A$-module iff
$(A,m)$ is isomorphic to $\av\eq(\av,\rhor)$ as a right $A$-module.
\end{lemma}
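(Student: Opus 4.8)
The plan is to reduce both module-isomorphism conditions to a single condition, namely that $A$ admits an invariant \nondeg\ pairing, and to exploit the correspondence between pairings and the morphisms $\ikl,\ikr$ that has already been set up above. First I would record that the assignment $\kap\mapsto\ikl$ of \erf{pic_csp_03} is a bijection from pairings in $\Hom(A\oti A,\one)$ to $\Hom(A,\va)$: by the first equality in \erf{d ikr = kap} its inverse is $\phi\mapsto\tilde d_A\cir(\ida\oti\phi)$, which follows from the snake identities for the left duality. In particular, $\kap$ is \nondeg\ in the sense of Definition \ref{FB} precisely when the associated $\ikl$ is an isomorphism. Symmetrically, $\kap\mapsto\ikr$ of \erf{pic_csp_04} is a bijection onto $\Hom(A,\av)$, with $\ikr$ an isomorphism iff $\kap$ is \nondeg, and with $\ikr=\wee{(\ikl)}$ by \erf{ikr=iklwee}.

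Next I would show that, under these bijections, the module-morphism properties translate into invariance of $\kap$. Concretely: for a pairing $\kap$ and its image $\ikl$, the statement that $\ikl$ is a morphism of left $A$-modules from $(A,m)$ to $(\va,\rhol)$, i.e.\ $\ikl\cir m=\rhol\cir(\ida\oti\ikl)$, is equivalent to the invariance $\kap\cir(m\oti\ida)=\kap\cir(\ida\oti m)$. This I would verify graphically: inserting the explicit form of $\rhol$ from \erf{rhol,rhor} and composing the module-morphism equation with $\tilde d_A$ (reconstructing $\kap$ from $\ikl$ via the snake identity) turns the left action on $\va$ into a left multiplication inside the pairing. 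By the same computation with left and right interchanged -- using $\rhor$, $b_A$ and $d_A$ in place of $\rhol$, $\tilde b_A$ and $\tilde d_A$ -- the statement that $\ikr$ is a morphism of right $A$-modules from $(A,m)$ to $(\av,\rhor)$ is equivalent to the \emph{same} invariance condition on $\kap$.

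With these two equivalences in hand the lemma follows. Suppose $(A,m)\Cong(\va,\rhol)$ as a left module, witnessed by an isomorphism $\phi\iN\Hom(A,\va)$; setting $\kap:=\tilde d_A\cir(\ida\oti\phi)$, the bijection above gives $\phi=\ikl$, so $\kap$ is \nondeg\ (as $\ikl$ is invertible) and invariant (as $\ikl$ is a left-module morphism). Hence $\ikr=\wee{(\ikl)}$ is an isomorphism by \erf{ikr=iklwee} and the iso-iff-iso property of \erf{wee}, and it is a morphism of right $A$-modules by the invariance of $\kap$; thus $(A,m)\Cong(\av,\rhor)$. The converse is identical, with the roles of $\ikl$ and $\ikr$ exchanged.

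I would expect the main obstacle to be the graphical step of the second paragraph -- establishing that the two module-morphism properties are each equivalent to the single invariance condition on $\kap$. Once the explicit expressions for $\rhol$ and $\rhor$ are inserted, this is a routine but slightly lengthy manipulation with the duality (snake) identities; everything else is bookkeeping with the bijections of \erf{pic_csp_03} and \erf{pic_csp_04} and with the already-established relation $\ikr=\wee{(\ikl)}$.
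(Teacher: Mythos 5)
Your proposal is correct and follows essentially the same route as the paper: both pass between $\Hom(A,\va)$ and $\Hom(A,\av)$ via $\irr=\wee{(\irl)}$ (which preserves invertibility), and both reduce the two module-morphism properties to a single equality of morphisms in $\Hom(A\oti A\oti A,\one)$ — your ``invariance of $\kap$'' is exactly the content of the paper's equations \erf{pic_csp_11} and \erf{pic_csp_12}, whose two sides the paper observes to be interchanged. Making the pairing $\kap$ the explicit intermediary is only a cosmetic repackaging of the same argument.
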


\begin{proof}
That $\irl\iN\Hom(A,\va)$ is a morphism of left $A$-modules means that
$\irl\cir m \eq \rho\cir(\ida\oti\irl)$, which in turn is equivalent to the
equality
  \eqpic{pic_csp_11} {140} {28} {
 \put(0,3){  
  \put(0,0)     {\IncludepicfuSt{11a}}
  \put(72,36)   {$ = $}
  \put(100,0)   {\IncludepicfuSt{11b}}
  \put(32,44)   {$\irl$}
  \put(144.9,44){$\irl$}
  \put(-3,-10)  {\sse$A$}
  \put(22.2,-10){\sse$A$}
  \put(48.4,-10){\sse$A$}
  \put(96.7,-10){\sse$A$}
  \put(123.2,-10){\sse$A$}
  \put(148.4,-10){\sse$A$}
} }
between morphisms in $\Hom(A\oti A\oti A,\one)$. Now given $\irl$, consider
$\irr\,{:=}\,\wee{\irl}\iN \Hom(A,\av)$
defined according to the prescription \erf{wee}. $\irr$ is invertible if and 
only if $\irl$ is. And the equality
  \eqpic{pic_csp_12} {140} {30} {
    \put(0,2){
  \put(0,0)     {\IncludepicfuSt{12a}}
  \put(72,36)   {$ = $}
  \put(100,0)   {\IncludepicfuSt{12b}}
  \put(7.2,44)   {$\irr$}
  \put(104.2,44) {$\irr$}
  \put(-3,-10)   {\sse$A$}
  \put(23.2,-10) {\sse$A$}
  \put(48.4,-10) {\sse$A$}
  \put(106.7,-10){\sse$A$}
  \put(132.1,-10){\sse$A$}
  \put(158.4,-10){\sse$A$}
} }
is equivalent to $\irr$ being a morphism of right $A$-modules.
\\[3pt]
Now notice that expressing $\irr$ in terms of $\irl$ and invoking the
defining properties of the evaluation and coevaluation morphisms, one sees that
the \lhs\ of \erf{pic_csp_12} is equal to the \rhs\ of \erf{pic_csp_11}, while
the \rhs\ of \erf{pic_csp_12} is equal to the \lhs\ of \erf{pic_csp_11}.
Thus validity of \erf{pic_csp_12} is equivalent to validity of \erf{pic_csp_11}.
\end{proof}

\begin{rem}\label{rem1}
In the considerations above, the existence of left and right dual objects 
and of the corresponding (co)evaluation morphisms is only needed for the
particular object $A$ under study. Still we prefer to assume the stronger
condition that the category \C\ is rigid, since the presence of such structures 
for $A$ cannot be expected unless
they form a part of suitable left- and right-duality endofunctors of \C.
\end{rem}

\begin{rem}
The fact that $\irlm\iN\Hom(\va,A)$ is a
morphism of left $A$-modules is equivalent to
  \eqpic{pic_csp_19} {140} {32} {
\put(0,-5){
  \put(0,0)     {\IncludepicfuSt{19a}}
  \put(74,36)   {$ = $}
  \put(100,0)   {\IncludepicfuSt{19b}}
  \put(-3,-10)     {\sse$A$}
  \put(9.6,85)     {\sse$A$}
  \put(47,85)      {\sse$A$}
  \put(157.2,-10)  {\sse$A$}
  \put(106.4,85)   {\sse$A$}
  \put(143.8,85)   {\sse$A$}
  \put(17.2,31.2)  {$\irlm$}
  \put(100.6,45.9) {$\irlm$}
} }
As a consequence, the morphism
  \be
  e := (\iklm\oti\ida) \circ \tilde b_A ~\iN\Hom(\one,A\oti A)
  \ee
is invariant in the sense that $(m\oti\ida) \cir (\ida\oti e) \eq
(\ida\oti m) \cir (e\oti\ida)$. If $A$ is such that $e$ in addition satisfies
$m\cir e\eq\eta$, then $e$ is an idempotent with respect to the convolution product 
$\conp$ on $\Hom(\one,A\oti A)$ that is given by
  \eqpic{pic_csp_24} {130} {34} {
\put(0,-8){
  \put(0,40)      {$ \conp(g,h) ~:= $}
  \put(72,0)      {\IncludepicfuSt{24}}
  \put(81.5,91)   {\sse$A$}
  \put(118.8,91)  {\sse$A$}
  \put(101,39.7)  {$g$}
  \put(101,2.4)   {$h$}
} }
for $f,g\iN\Hom(\one,A\oti A)$. In fact, $e$ is then a separability 
idempotent for the algebra $A$.
\end{rem}


\section{Equivalence of the three notions of \Fa}

In this section we establish that in rigid monoidal categories the three 
Definitions \ref{FA}, \ref{FB} and \ref{FC} actually
describe one and the same concept.

\begin{prop}
In a rigid monoidal category \,\C\ the notions of \FAs\ and of \FBs\ on an
algebra $(A,m,\eta)$ are equivalent.
\\[3pt]
More concretely:
\\[4pt]
{\rm(i)}\,~~If $(A,m,\eta,\Delta,\eps)$ is an algebra with \FAs, then 
$(A,m,\eta,\kap_\eps)$ with
  \be
  \kap_\eps := \eps \circ m
  \labl{kapepsdef}
is an algebra with \FBs. 
\\[4pt]
{\rm(ii)}\,~If $(A,m,\eta,\kap)$ is an algebra with \FBs, then 
$(A,m,\eta,\Delta_\kap,\eps_\kap)$ with
  \be
  \Delta_\kap := (\ida\oti m) \circ (\ida\oti\ikrm\oti\ida) \circ (b_A\oti\ida)
  \qquad{\rm and}\qquad
  \eps_\kap := \kap \circ (\ida\oti\eta)
  \labl{deltakapepskapdef}
is an algebra with \FAs. 
\end{prop}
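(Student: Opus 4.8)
The plan is to prove the two implications separately; in each the work reduces to the snake (duality) relations for an auxiliary copairing, built out of $\Delta$ in case (i) and out of $\kap$ in case (ii).

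For (i), invariance of $\kap_\eps\eq\eps\circ m$ is immediate: by associativity of $m$ both $\kap_\eps\circ(m\oti\ida)$ and $\kap_\eps\circ(\ida\oti m)$ equal $\eps\circ m\circ(\ida\oti m)$. For non-degeneracy I would introduce the copairing $\gamma:=\Delta\circ\eta\iN\Hom(\one,A\oti A)$ and establish the two snake identities $(\ida\oti\kap_\eps)\circ(\gamma\oti\ida)\eq\ida\eq(\kap_\eps\oti\ida)\circ(\ida\oti\gamma)$. Each follows in three moves: first use the bimodule property \erf{picfrob} of $\Delta$ to replace $(\ida\oti m)\circ(\Delta\oti\ida)$ by $\Delta\circ m$ (resp.\ its mirror), then the unit axiom $m\circ(\eta\oti\ida)\eq\ida$, and finally the counit axiom to cancel $\eps$. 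Once the snakes hold, $\gamma$ is an inverse copairing for $\kap_\eps$; contracting it against $\tilde d_A$ yields an explicit two-sided inverse of $\ikel$ (and hence of $\iker$, cf.\ \erf{d ikr = kap}), so that $\kap_\eps$ is \nondeg.

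For (ii), set $\gamma:=(\ida\oti\ikrm)\circ b_A\iN\Hom(\one,A\oti A)$, so that by definition $\Delta_\kap\eq(\ida\oti m)\circ(\gamma\oti\ida)$. I would first record the snakes $(\ida\oti\kap)\circ(\gamma\oti\ida)\eq\ida\eq(\kap\oti\ida)\circ(\ida\oti\gamma)$; these are pure duality, obtained from the identities $\kap\eq d_A\circ(\ikr\oti\ida)\eq\tilde d_A\circ(\ida\oti\ikl)$ of \erf{d ikr = kap} together with $(\ikl\oti\ikrm)\circ b_A\eq\tilde b_A$ from \erf{ikl x ikrm b = tb} and the zig-zag axioms. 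Next I would check the mirror of \erf{kapepsdef}, namely $\eps_\kap\circ m\eq\kap$, which is again just invariance of $\kap$ plus the unit axiom. The counit axioms for $(\Delta_\kap,\eps_\kap)$ then fall out: $(\ida\oti\eps_\kap)\circ\Delta_\kap\eq(\ida\oti(\eps_\kap\circ m))\circ(\gamma\oti\ida)\eq(\ida\oti\kap)\circ(\gamma\oti\ida)\eq\ida$, and symmetrically for the other side.

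The conceptual core of (ii) is to upgrade invariance of $\kap$ to invariance of the copairing, $(m\oti\ida)\circ(\ida\oti\gamma)\eq(\ida\oti m)\circ(\gamma\oti\ida)$. Here non-degeneracy is essential: the snakes make $F\mapsto(\ida\oti\kap)\circ(F\oti\ida)$ a bijection $\Hom(A,A\oti A)\To\Hom(A\oti A,A)$, so it suffices to check that both sides of the desired equality are sent to the \emph{same} morphism. A short computation shows each is sent to $m$ --- for the right-hand side this step uses invariance of $\kap$ and a snake, for the left-hand side only a snake --- whence the two sides agree. With this sliding identity in hand, both halves of the bimodule property \erf{picfrob} for $\Delta_\kap$, as well as coassociativity of $\Delta_\kap$, reduce to moving the multiplication from one leg of $\gamma$ to the other and invoking associativity of $m$; these are routine. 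I expect the upgrade of invariance from $\kap$ to $\gamma$, together with keeping the left- and right-dual conventions straight, to be the main obstacle; everything downstream of it is formal.
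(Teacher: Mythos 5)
Your proposal is correct and follows essentially the same route as the paper: your copairing $\gamma$ and its two snake identities are exactly the content of the paper's explicit inverses for $\ikel$ and $\iker$ in part (i), and of its ``alternative descriptions'' of $\Delta_\kap$ --- i.e.\ the sliding identity $(m\oti\ida)\circ(\ida\oti\gamma)\eq(\ida\oti m)\circ(\gamma\oti\ida)$ --- in part (ii), from which coassociativity, the counit property and the bimodule property are read off just as you describe. The only genuine difference is local: you obtain the sliding identity by transporting both sides through the bijection $F\mapsto(\ida\oti\kap)\circ(F\oti\ida)$, whereas the paper gets it directly by writing $m\eq\ikrm\circ\ikr\circ m$ and applying invariance of $\kap$ together with \erf{ikl x ikrm b = tb}; both are one-step arguments and neither buys anything substantial over the other.
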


\begin{proof}
(i)~~Let $(A,m,\eta,\Delta,\eps)$ be an algebra with \FAs\ in the category \C\ 
and define the 
pairing $\kap_\eps\iN\Hom(A\oti A,\one)$ by \erf{kapepsdef}. Then the morphisms
  \be  
  \ikel = (\idva\oti\kap_\eps)\cir(\tilde b_A\oti\ida)
  \qquad{\rm and}\qquad
  \iker = (\kap_\eps\oti\idav)\cir(\ida\oti b_A) 
  \labl{defphilphir}
are isomorphisms, with inverses given by the morphisms
$(\ida\oti\tilde d_A)\cir((\Delta{\circ}\eta)\oti\idva) \iN\Hom(\va,A)$ and
$(d_A\oti\ida)\cir
  $\linebreak[0]$
(\idav\oti(\Delta{\circ}\eta)) \iN\Hom(\av,A)$, respectively.
Thus $\kap_\eps$ is \nondeg.
That $\kap_\eps$ is invariant is an immediate consequence of the
associativity of the product $m$.
\\
Thus $(A,m,\eta,\kap_\eps)$ is an algebra with \FBs.
\\[3pt]
(ii)\,~Let $(A,m,\eta,\kap)$ be an algebra with \FBs\ in \C\ and define
$\Delta_\kap$ and $\eps_\kap$ by \erf{deltakapepskapdef}.
\\[3pt]
(ii\,a)\,~To see that $\Delta_\kap$ is a coassociative coproduct, first notice that
with the help of invariance of $\kap$, the product $m$ can be rewritten as
  \eqpic{pic_csp_06} {340} {36} {
\put(0,-3){
  \put(0,39)     {$ m \,=\, \ikrm \circ \ikr \circ m \,= $}
  \put(132,0)    {\IncludepicfuSt{06a}}
  \put(225,39)   {$ = $}
  \put(255,0)    {\IncludepicfuSt{06b}}
  \put(128.3,-10){\sse$A$}
  \put(154.6,-10){\sse$A$}
  \put(196.6,94) {\sse$A$}
  \put(190.0,67.2){$\ikrm$}
  \put(261.2,-10){\sse$A$}
  \put(276.7,-10){\sse$A$}
  \put(328.5,94) {\sse$A$}
  \put(255.4,51) {$\ikr$}
  \put(321.6,41.0){$\ikrm$}
} }
Together with \erf{pic_csp_05} this implies that two alternative descriptions 
of $\Delta_\kap$ are
  \eqpic{pic_csp_07} {340} {38} {
\put(0,-5){
  \put(0,41)      {$ \Delta_\kap \,\equiv $}
  \put(52,0)      {\IncludepicfuSt{07a}}
  \put(125,41)    {$ = $}
  \put(155,0)     {\IncludepicfuSt{07b}}
  \put(238,41)    {$ = $}
  \put(268,0)     {\IncludepicfuSt{07c}}
  \put(49.2,94.4) {\sse$A$}
  \put(86.3,94.4) {\sse$A$}
  \put(99.0,-8.1) {\sse$A$}
  \put(165.5,94.4){\sse$A$}
  \put(202.6,94.4){\sse$A$}
  \put(151.3,-8.1){\sse$A$}
  \put(278.4,94.4){\sse$A$}
  \put(315.9,94.4){\sse$A$}
  \put(264.6,-8.1){\sse$A$}
  \put(66.9,38.5) {$\ikrm$}
  \put(196.0,41)  {$\ikrm$}
  \put(285,38.7)  {$\iklm$}
} }
Using the first two descriptions we can write
  \eqpic{pic_csp_08} {400} {58} {
\put(0,12){
  \put(-40,51)     {$ (\Delta_\kap\oti\ida) \circ \Delta_\kap \,= ·$}
  \put(68,0)       {\IncludepicfuSt{08a}}
  \put(65.5,115.5) {\sse$A$}
  \put(102.5,115.5){\sse$A$}
  \put(153,115.5)  {\sse$A$}
  \put(101.5,-10)  {\sse$A$}
  \put(82.9,62.8)  {$\ikrm$}
  \put(146.6,53.1) {$\ikrm$}
  \put(190,51)     {and $\quad~ (\ida\oti\Delta_\kap) \circ \Delta_\kap \,= $}
  \put(335,0)      {\IncludepicfuSt{08b}}
  \put(332.4,115.5){\sse$A$}
  \put(383,115.5)  {\sse$A$}
  \put(420.1,115.5){\sse$A$}
  \put(381.8,-10)  {\sse$A$}
  \put(350.0,35.7) {$\ikrm$}
  \put(413.6,70.6) {$\ikrm$}
} }
~\\[-24pt]
Thus coassociativity of $\Delta_\kap$ follows immediately from
associativity of $m$.
\\{}\\[-10pt]
(ii\,b)\,~Next we verify that $\eps_\kap$ is a counit for $\Delta_\kap$. 
First note that, by invariance of $\kap$, an alternative description of 
$\eps_\kap$ is 
  $ \eps_\kap \eq \kap \cir (\eta\oti\ida) $.
Thus together with \erf{d ikr = kap} one obtains
  \be
  \eps_\kap = \tilde
  d_A\circ(\ida\oti(\ikl\circ\eta)) = d_A\circ((\ikr\oti\eta)\oti\ida) \,.
  \labl{epskap = td e x ikl}
Combining the first of these expressions for $\eps_\kap$ with the third 
description of $\Delta_\kap$ in \erf{pic_csp_07} one immediately arrives at 
$(\ida\oti\eps_\kap)\cir\Delta_\kap\eq\ida$, while the
second expression together with the first in \erf{pic_csp_07} yields
$(\eps_\kap\oti\ida)\cir\Delta_\kap\eq\ida.$
\\[3pt]
(ii\,c)\,~To derive the equalities \erf{picfrob} is now easy. Using the first
of the descriptions \erf{pic_csp_07} for $\Delta_\kap$ and associativity,
one sees that
  \be
  \Delta_\kap \circ m = (\ida\oti m)\circ(\Delta_\kap\oti\ida) \,,
  \ee
while 
  \be
  \Delta_\kap \circ m = (m\oti\ida)\cir(\ida\oti\Delta_\kap) 
  \ee
follows by using instead the third of those descriptions and again associativity.
\end{proof}

With the informations gathered so far at hand,
it is straightforward to verify also 

\begin{prop}
In a rigid monoidal category \C\ the notions of \FBs\ and of \FCs\ on an
algebra $(A,m,\eta)$ are equivalent.
\\[3pt]
More specifically, for any algebra $A$ in \C\ the following holds:
\\[4pt]
{\rm(i)}\,~~There exists a \nondeg\ pairing on $A$ iff $A$ is isomorphic to $\va$
as an object of \,\C.
\\[4pt]
{\rm(ii)}\,~There exists an invariant pairing on $A$ iff there exists a
morphism from $A$ to $\va$ that is a morphism of left $A$-modules.
\end{prop}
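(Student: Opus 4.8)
The plan is to route the entire statement through the single assignment $\kap\mapsto\ikl$ of Definition \ref{FB}, which by rigidity is a bijection between pairings on $A$ and morphisms in $\Hom(A,\va)$, and then to check that under this bijection the two defining properties of a \FBs\ match exactly the two conditions entering a \FCs. Parts (i) and (ii) isolate these two matches separately, and together they yield the asserted equivalence of \FBs\ and \FCs.

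First I would verify that the map sending a pairing $\kap$ to $\ikl\eq(\idva\oti\kap)\cir(\tilde b_A\oti\ida)$ is a bijection from $\Hom(A\oti A,\one)$ onto $\Hom(A,\va)$. This is the adjunction isomorphism supplied by the left duality: the inverse sends $\phi\iN\Hom(A,\va)$ to the pairing $\tilde d_A\cir(\ida\oti\phi)$, and that the two assignments are mutually inverse is an immediate consequence of the zig-zag identities for $\tilde b_A$ and $\tilde d_A$ (for $\phi\eq\ikl$ this is precisely the second equality in \erf{d ikr = kap}).

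Under this bijection, non-degeneracy of $\kap$ is by Definition \ref{FB} literally the statement that $\ikl$ is an isomorphism. This at once settles part (i): a \nondeg\ pairing exists iff some $\ikl\iN\Hom(A,\va)$ is invertible, i.e.\ iff $A\Cong\va$ as objects of \C. The substance of the argument is the corresponding statement for the other property, namely that $\kap$ is invariant iff $\ikl$ is a morphism of left $A$-modules; this yields part (ii), and combined with the preceding step it gives the full equivalence. I would establish it graphically: starting from the module-morphism equation \erf{pic_csp_11}, $\irl\cir m\eq\rhol\cir(\ida\oti\irl)$, I insert the definition \erf{rhol,rhor} of $\rhol$ and compose with $\tilde d_A$, using $\kap\eq\tilde d_A\cir(\ida\oti\ikl)$ to collapse both sides; sliding the remaining $A$-strand along the dual line by the zig-zag identities turns the module condition into the invariance identity \erf{pic_csp_02}, $\kap\cir(m\oti\ida)\eq\kap\cir(\ida\oti m)$. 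Since every move uses only the reversible duality identities, the equivalence holds in both directions.

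I expect this last step to be the main obstacle, though a mild one: the difficulty is bookkeeping rather than conceptual---keeping the left dual $\va$ distinct from the right dual $\av$ and threading the strands correctly through $\rhol$---because the pictures \erf{pic_csp_02}, \erf{pic_csp_11}, \erf{rhol,rhor} together with \erf{d ikr = kap} already supply all the data that is needed.
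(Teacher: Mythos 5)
Your proposal is correct and follows essentially the same route as the paper: both proofs use the duality adjunction to set up the mutually inverse assignments $\varphi\mapsto\tilde d_A\cir(\ida\oti\varphi)$ and $\kap\mapsto\ikl$, identify non-degeneracy with invertibility of $\ikl$ for part (i), and identify invariance with the left-module morphism condition \erf{pic_csp_11} for part (ii) (the paper quotes this last equivalence from the proof of Lemma \ref{lem1} rather than re-deriving it graphically, but the content is identical).
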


\begin{proof}
Given a morphism $\varphi\iN\Hom(A,\va)$, we can define a pairing
$\kap_\varphi$ on $A$ by 
  \be
  \kap_\varphi := \tilde d_A\cir(\ida\oti\varphi) \,.
  \labl{defkapvarphi}
Conversely, given a pairing $\kap$ on $A$ we can define a morphism $\psi
\iN\Hom(A,\va)$ by $\psi\,{:=}\,\ikl$ as in \erf{pic_csp_03}.
Obviously the two operations are inverse to each other, in the sense that
  \be
  \Phi_{\kap_\varphi,\rm l} = \varphi \,.
  \ee
(i)~~Thus, given an {\em iso\/}morphism $\varphi\iN\Hom(A,\va)$, 
the morphism $\Phi_{\kap_\varphi,\rm l}$ associated to the pairing $\kap_\varphi$
is invertible, and hence $\kap_\varphi$ is \nondeg.
Conversely, given a {\em \nondeg\/} pairing $\kap$ on $A$, the morphism 
$\varphi\,{:=}\,\ikl\iN\Hom(A,\va)$ is an isomorphism.
\\[3pt]
(ii)\,~If a pairing on $A$ is of the form $\kap\eq\kap_\varphi$, then the statement
that it is invariant is precisely the equality \erf{pic_csp_11}. 
Hence if $\varphi\,{\equiv}\,\varphi_\rho$ is a {\em left module morphism\/},
then $\kap_\varphi$ is invariant.
Conversely, given an {\em invariant\/} pairing $\kap$ on $A$, the morphism
$\psi\,{:=}\,\ikl$ satisfies \erf{pic_csp_11}, and hence it is a
morphism of left $A$-modules.
\\[3pt]
Together, the validity of (i) and (ii) implies that if $(A,m,\eta,\irl)$ is an
algebra with \FCs, then the pairing $\kap_{\irl}$ is a \FBs\ on $A$. And 
conversely, if $(A,m,\eta,\kap)$ is an algebra with \FBs, then $\psi\eq\ikl$ 
is invertible, because $\kap$ is \nondeg, and hence it is a \FCs\ on $A$.
\end{proof}

We are now in a position to state

\begin{Def}\label{FRO}
A {\em \FRO\ algebra\/} in a rigid monoidal category \C\ is an algebra in \C\ for which
the following three equivalent conditions are satisfied:
\\[3pt]
(i)~~~There exists a \FAs\ on $A$.
\\[3pt]
(ii)~~There exists a \FBs\ on $A$.
\\[3pt]
(iii)~\,There exists a \FCs\ on $A$.
\end{Def}


\section{Symmetric Frobenius algebras}

The classical notion of symmetric Frobenius algebra can be formulated in the
categorical setting if the monoidal category \C\ is {\em sovereign\/}. This means 
that \C\ is rigid and that the left- and right-duality endofunctors are equal: 
That is, one has ${}^{\vee\!}U \eq U^\vee$ for all objects $U$, as well as 
${}^{\vee\!\!}f\eq f^\vee$, i.e.
  \be
  \bearl
  (\id_{{}^\vee U}\oti\tilde d_V) \circ (\id_{{}^\vee U}\oti f\oti\id_{{}^\vee V})
  \circ (\tilde b_U\oti\id_{{}^\vee V})
  \\{}\\[-.8em]\hspace*{7.5em}
  = (d_V\oti\id_{U^\vee}) \circ (\id_{V^\vee}\oti f\oti\id_{U^\vee}) \circ 
  (\id_{V^\vee}\oti b_U) ~\iN \Hom(V^\vee,U^\vee) \,,
  \eear
  \labl{vf=fv}
for all objects $U,V$ and all morphisms $f\iN\Hom(U,V)$.

\begin{rem}
As one particular aspect of sovereignty, we note that when applied to the left 
and right (co)evaluations, the equality \erf{vf=fv} amounts to
  \eqpic{pic_csp_20} {390} {28} {
\put(0,-1){
  \put(0,0)     {\IncludepicfuSt{20c}}
  \put(71,28)   {$ = $}
  \put(98,0)    {\IncludepicfuSt{20d}}
  \put(188,28)  {and}  
  \put(246,0)   {\IncludepicfuSt{20a}}
  \put(317,28)  {$ = $}
  \put(344,0)   {\IncludepicfuSt{20b}}
 \put(246,0){
  \put(48,68)     {\sse$U$}
  \put(-7,-10)    {\sse$^{\vee\!}U^\vee$}
  \put(95.5,68)   {\sse$U$}
  \put(141.5,-10) {\sse$^{\vee\!}U^\vee$}
 }
 \put(-245,0){
  \put(289.5,68)  {\sse$^{\vee\!}U^\vee$}
  \put(242.9,-10) {\sse$U$}
  \put(336.5,68)  {\sse$^{\vee\!}U^\vee$}
  \put(391.5,-10) {\sse$U$}
 }
} }
These equalities may be written as $\wee g\eq\eeW g$ and as
$(\eeW g)^{\!\vee}\eq{}^{\vee\!}(\wee g)$ for $g\eq\id_{{}^{\vee\!}U}\,{\equiv}\,
\id_{U^\vee}$, respectively, where for $f\iN\Hom(U,{}^{\vee\!}V)$ the morphism 
$\wee f$ is the one obtained from $f$ according to the prescription
\erf{wee}, while the morphism $\eEW f\iN\Hom(V,{}^{\vee\!}U)$ is defined by
  \be
  \eEW f := (\id_{{}^{\vee\!}U} \oti d_V) \circ
  (\id_{{}^{\vee\!}U} \oti f \oti \id_V) \circ (\tilde b_U \oti \id_V)
  \ee
for any $f\iN\Hom(U,V^{\vee})$.
\\
With the help of \erf{pic_csp_20} it is easy to check that, for ${}^{\vee\!}V$ 
equal to $V^\vee$, the sovereignty relation \erf{vf=fv} is equivalent to having
  \be
  \eEW f = \wee f
  \labl{wf=fw}
for all $f\iN\Hom(U,V^\vee)$.
Also note that the operations $\wee?$ and $\eew?$ are defined only on morphisms,
but not on objects, and that $\eew{\wee?}$ is the identity mapping.
\end{rem}

The formulation \erf{wf=fw} of the sovereignty relation is often quite
convenient. As an illustration, it allows one to quickly obtain the
following `opposite' version of the second identity in \erf{pic_csp_05}:
  \eqpic{pic_csp_21} {240} {30} {
\put(0,0){
  \put(0,0)      {\IncludepicfuSt{21a}}
  \put(73,34)    {$ = $}
  \put(102,0)    {\IncludepicfuSt{21b}}
  \put(202,34)   {$ = $}
  \put(230,0)    {\IncludepicfuSt{21c}}
  \put(7,-10)     {\sse$A$}
  \put(36,-10)    {\sse$\av$}
  \put(1,34.2)    {$\ikl$}
  \put(32,34.2)   {$\ikrm$}
  \put(145,-10)   {\sse$A$}
  \put(166.1,-10) {\sse$\av$}
  \put(115.5,35.8){$\ikr$}
  \put(161.2,25.6){$\ikrm$}
  \put(227,-10)   {\sse$A$}
  \put(258,-10)   {\sse$\va$}
} }

\begin{rem} \label{rem2}
As a matter of fact, for introducing the notion of symmetric \Fa\ it is already 
sufficient that $\va$ and $\av$ are equal as objects of \C. Similarly, 
all the results below remain true if one has $\va\eq\av$ as well as 
${}^{\vee\!\!}f\eq f^\vee$ for just a few particular morphisms $f$, like for the
product $m$ of $A$ and the morphisms $\ikl\iN\Hom(A,\va)$ defined in 
\erf{pic_csp_03}. However, the only situation known to us in which these 
weaker conditions are satisfied naturally is that \C\ is indeed sovereign.
(Compare the analogous comments on rigidity in Remark \ref{rem1}.)
\end{rem}

We start with the notion of symmetric \Fa\ that is used e.g.\ in \cite{fuRs,muge8}.

\begin{Def}\label{FAs}
A {\em symmetric\/} \FAs\ on an algebra $A\eq(A,m,\eta)$ in a sovereign monoidal 
category \C\ is a \FAs\ $(\Delta,\eps)$ for which the endomorphism 
  \be
  \nake := (d_A\oti\ida) \cir [\idav \oti (\Delta\cir\eta\cir\eps\cir m)]
  \cir (\tilde b_A\oti\ida) 
  \labl{nakedef}
of $A$ equals $\ida$. 
\end{Def}

Note that by the Frobenius relations \erf{picfrob} the equality $\nake\eq\ida$ 
is equivalent to the equality $\phil\eq\phir$ between the two morphisms from $A$
to $\va\eq\av$ that we introduced in \erf{defphilphir} and which, owing to the 
Frobenius relations, are isomorphisms, and are related to the morphism
\erf{nakedef} by $\nake\eq\phir^{-1}\cir\phil$. Pictorially this identity reads
  \eqpic{pic_csp_13} {135} {33} {
\put(0,-1){
  \put(-55,34)  {$ \phil \,= $}
  \put(0,0)     {\IncludepicfuSt{13a}}
  \put(72,34)   {$ = $}
  \put(100,0)   {\IncludepicfuSt{13b}}
  \put(175,34)  {$ =\, \phir \,. $}
  \put(-3,79.2) {\sse$\va$}
  \put(51.6,-8.8){\sse$A$}
  \put(151,79.2){\sse$\av$}
  \put(96.7,-8.8){\sse$A$}
} }

\begin{Def}\label{FBs}
A {\em symmetric\/} \FBs\ on an algebra $A\eq(A,m,\eta)$ in a sovereign monoidal 
category \C\ is a \FBs\ $\kap\iN\Hom(A,\oti A,\one)$ on $A$ that is symmetric in 
the sense that the equality
  \be
  d_A \circ (\idva\oti\kap\oti\ida) \circ (\tilde b_A\oti\ida\oti\ida) = \kap
  \labl{eqFBs}
holds.
\end{Def}

Note that on the \lhs\ of \erf{eqFBs}, the evaluation 
$d_A\iN\Hom(\av{\otimes}\,A,\one)$ 
is composed with a morphism in $\Hom(A\oti A,\va\oti A)$, which in the present 
context is the reason why we need $\va\eq\av$. Pictorially, \erf{eqFBs} reads
  \eqpic{pic_csp_14} {110} {21} {
\put(0,-2){
  \put(0,0)     {\IncludepicfuSt{14a}}
  \put(76,23)   {$ = $}
  \put(104,0)   {\IncludepicfuSt{14b}}
  \put(30.4,-9.2)  {\sse$A$}
  \put(53.4,-9.2)  {\sse$A$}
  \put(107.7,-9.2) {\sse$A$}
  \put(118.5,-9.2) {\sse$A$}
} }
By the defining properties of the (co)evaluation, this relation is equivalent to
  \eqpic{pic_csp_15} {110} {21} {
\put(0,-2){
  \put(0,0)     {\IncludepicfuSt{15}}
  \put(76,23)   {$ = $}
  \put(104,0)   {\IncludepicfuSt{14b}}
  \put(-3.3,-9.2)  {\sse$A$}
  \put(19.4,-9.2)  {\sse$A$}
  \put(107.7,-9.2) {\sse$A$}
  \put(118.5,-9.2) {\sse$A$}
} }

Next note that when $A$ has a \FCs\ and the equality $\va\eq\av$ between
left and right dual objects holds, then $A$ is isomorphic
to $\va\eq\av$ both as a left and as a right module.
Accordingly the following definition is natural.

\begin{Def}\label{FCs}
A {\em symmetric\/} \FCs\ on an algebra $A\eq(A,m,\eta)$ in a sovereign 
monoidal category \C\ is an isomorphism $\irl$ from $A$ to $\va\eq\av$ that is
both a morphism of left $A$-modules and a morphism of right $A$-modules (and thus
a morphism of $A$-bimodules).
\end{Def}

In terms of the notations used in the proof of Lemma \ref{lem1}, that a \FCs\
is symmetric means that we have $\irr\eq\irl$, or equivalently, that
the isomorphism $\irl$ satisfies
  \eqpic{pic_csp_16} {120} {25} {
\put(0,3){
  \put(0,0)     {\IncludepicfuSt{16a}}
  \put(64,23)   {$ = $}
  \put(92,0)    {\IncludepicfuSt{16b}}
  \put(-3.5,-10) {\sse$A$}
  \put(34.9,-10) {\sse$A$}
  \put(32.7,25.4){$\irl$}
  \put(98.4,-10) {\sse$A$}
  \put(136.8,-10){\sse$A$}
  \put(96.2,25.4){$\irl$}
} }

\begin{prop}
In a sovereign monoidal category \C\ the notions of symmetric \FAs, 
symmetric \FBs\ and symmetric \FCs\ are equivalent.
\end{prop}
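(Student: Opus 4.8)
The plan is to reduce all three symmetry conditions to a single equality, namely $\ikl\eq\ikr$, between the two isomorphisms $\ikl,\ikr\iN\Hom(A,\va)\eq\Hom(A,\av)$ that are attached to any Frobenius structure on $A$, and then invoke the bijective correspondences already established in the two preceding propositions. Recall that to any \FBs\ $\kap$ these morphisms are associated by \erf{pic_csp_03} and \erf{pic_csp_04}, that they are related by $\ikr\eq\wee{\ikl}$ via \erf{ikr=iklwee}, and that in a sovereign category their common target $\va\eq\av$ is what makes the equation $\ikl\eq\ikr$ meaningful. First I would record how $\ikl$ and $\ikr$ surface in each of the three settings, and then show that in each case symmetry amounts precisely to $\ikl\eq\ikr$.

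For the \FAs\ case the identification is already supplied by the remark following Definition \ref{FAs}: the morphisms $\phil,\phir$ of \erf{defphilphir} coincide with $\ikl,\ikr$ for the pairing $\kap_\eps\eq\eps\cir m$, and since $\nake\eq\phir^{-1}\cir\phil$ the condition $\nake\eq\ida$ is equivalent to $\phil\eq\phir$, i.e.\ to $\ikl\eq\ikr$. For the \FCs\ case I would use that, under the correspondence of the second proposition, the pairing $\kap_{\irl}$ satisfies $\Phi_{\kap_{\irl},\rm l}\eq\irl$, so that $\ikl\eq\irl$ and hence $\ikr\eq\wee{\ikl}\eq\wee{\irl}\eq\irr$; by Definition \ref{FCs}, equivalently \erf{pic_csp_16}, symmetry of the \FCs\ means exactly $\irr\eq\irl$, which is once more $\ikl\eq\ikr$.

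The only genuine computation concerns the \FBs\ case, where the symmetry condition \erf{eqFBs} is phrased directly in terms of $\kap$. Writing $\kap^{\rm op}$ for the \lhs\ of \erf{eqFBs}, I would unfold its definition and observe that $(\idva\oti\kap\oti\ida)\cir(\tilde b_A\oti\ida\oti\ida)\eq\ikl\oti\ida$, so that $\kap^{\rm op}\eq d_A\cir(\ikl\oti\ida)$, whereas $\kap\eq d_A\cir(\ikr\oti\ida)$ by \erf{d ikr = kap}. Thus \erf{eqFBs} reads $d_A\cir(\ikl\oti\ida)\eq d_A\cir(\ikr\oti\ida)$, and composing both sides with $\ida\oti b_A$ (after tensoring with $\idav$) and applying the snake identity for $A$ cancels $d_A$, leaving $\ikl\eq\ikr$; the converse implication is immediate. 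It is exactly here that sovereignty enters, since $d_A$ must be applied to the object $\va\eq\av$.

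Having shown that each of the three symmetry requirements is equivalent to the single equality $\ikl\eq\ikr$, the asserted equivalence of the three notions then follows at once from the equivalence of the underlying non-symmetric structures established in the preceding two propositions. I expect the only delicate point to be the bookkeeping of which morphism plays the role of $\ikl$ and of $\ikr$ in each of the three settings; the core identity $\kap^{\rm op}\eq d_A\cir(\ikl\oti\ida)$ is just a short graphical manipulation of the (co)evaluation morphisms.
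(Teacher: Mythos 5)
Your proposal is correct and follows essentially the same route as the paper: both arguments observe that each of the three symmetry conditions is, after unwinding the already-established correspondences and composing with (co)evaluation morphisms, one and the same identity. The paper uses the pairing-level equality \erf{pic_csp_14} as the hub while you use the equivalent morphism-level equality $\ikl\eq\ikr$ (which the paper itself records as $\phil\eq\phir$ and $\irl\eq\irr$ in the discussions surrounding Definitions \ref{FAs} and \ref{FCs}), so the difference is purely organizational.
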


\begin{proof}
Recalling the relations between the characteristic morphisms of \FAs s,
\FBs s and \FCs s the assertion is close to a tautology. Let us nonetheless
write out the proof.
\\[3pt]
(i)~~Given a symmetric \FAs, define the pairing $\kap_\eps$ as in \erf{kapepsdef}.
For $\kap\eq\kap_\eps$, the symmetry property \erf{pic_csp_14} is satisfied
because it is nothing but (up to composition with appropriate (co)evaluations)
the equality \erf{pic_csp_13}. Conversely, given a symmetric \FBs, define the
counit $\eps_\kap$ as in \erf{deltakapepskapdef}. Then the symmetry property 
\erf{pic_csp_13} is satisfied because owing to $\eps_\kap\cir m \eq \kap$
it is nothing but (up to composition with appropriate (co)evaluations) the 
equality \erf{pic_csp_14}.
\\
Thus the notions of symmetric \FAs\ and of symmetric \FBs\ are equivalent.
\\[3pt]
(ii)\,~Given a symmetric \FCs, define the pairing $\kap_\irl$ as in 
\erf{defkapvarphi} with $\varphi\eq\irl$. For $\kap\eq\kap_\irl$, the symmetry 
property \erf{pic_csp_14} is satisfied because after use of the defining property
of $\tilde b_A$ and $\tilde d_A$ it is nothing but the equality \erf{pic_csp_16}.
Conversely, given a symmetric \FBs, define the morphism $\irl\,{:=}\,\ikl$
as in \erf{pic_csp_03}. Then the symmetry property \erf{pic_csp_16}
is satisfied because, again after use of the duality axioms, it is nothing but 
the equality \erf{pic_csp_14}.
\\
Thus the notions of symmetric \FCs\ and of symmetric \FBs\ are equivalent.
\end{proof}

In analogy with Definition \ref{FRO} we now give

\begin{Def}\label{sFRO}
A {\em symmetric \Fa\/} in a sovereign monoidal category \C\ is an algebra in 
\C\ for which the following three equivalent conditions are satisfied:
\\[3pt]
(i)~~~There exists a symmetric \FAs\ on $A$.
\\[3pt]
(ii)~~There exists a symmetric \FBs\ on $A$.
\\[3pt]
(iii)~\,There exists a symmetric \FCs\ on $A$.
\end{Def}

It is worth pointing out that none of these structures requires \C\ to be 
braided. If \C\ does have a braiding, then it can of course be used to
reformulate the property of $\eps$, $\kap$ and $\irl$ of being symmetric in
a manner that resembles more closely the customary description in the vector
space case.


\section{Nakayama automorphisms}

Given an algebra $A$ in a sovereign monoidal category \C\ and a pairing $\kap$ 
on $A$, we call an endomorphism $\naka\,{\equiv}\,\naka_\kap$ a {\em Nakayama
morphism \/} iff $d_A \cir (\idav\oti\kap\oti\ida) \cir (\tilde b_A\oti\ida\oti
\ida) \eq \kap \cir (\naka\oti\ida)$ (see e.g.\ \cite[\S16E]{LAm} for the 
classical case). Pictorially, the defining relation for $\naka$ reads
  \eqpic{pic_csp_17} {120} {22} {
\put(0,0){
  \put(0,0)     {\IncludepicfuSt{14a}}
  \put(76,23)   {$ = $}
  \put(104,0)   {\IncludepicfuSt{17}}
  \put(30.2,-10)  {\sse$A$}
  \put(53.1,-10)  {\sse$A$}
  \put(107.5,-10) {\sse$A$}
  \put(118.3,-10) {\sse$A$}
  \put(107.5,17.2){$\naka$}
} }
Defining  $\ikl\iN\Hom(A,\va)$ and $\ikr\iN\Hom(A,\av)$ as in \erf{pic_csp_03} and
\erf{pic_csp_04}, one can rewrite this relation as
  $ \ikr \cir \naka \eq \ikl $.
Now if $A$ is a Frobenius algebra with \FBs\ $\kap$, then $\ikr$ is
invertible, so that
  \be
  \naka = \ikrm \circ \ikl \,,
  \labl{naka=ikrm ikl}
and in particular $\naka$ is an \auto\ of $A$ as an object of \C.
(Note that the \rhs\ of \erf{naka=ikrm ikl} may also be written as
$\ikrm\cir\eew{(\ikr)}$; $\naka$ belongs therefore to the class of automorphisms
$\mathcal V_{\!A}$ that are used in \cite{fuSc16}
for assigning a Frobenius-Schur indicator to $A$.)

\begin{prop}\label{Nakaalgmor}
Any \Naka\, $\naka$ of a \Fa\ $A$ in a sovereign monoidal category 
is a unital algebra morphism.
\end{prop}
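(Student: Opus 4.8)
The plan is to reduce both assertions about $\naka$---unitality $\naka\cir\eta\eq\eta$ and multiplicativity $\naka\cir m\eq m\cir(\naka\oti\naka)$---to identities between pairings, which are then settled by the non-degeneracy of $\kap$. Recall from $\erf{naka=ikrm ikl}$ that $\naka\eq\ikrm\cir\ikl$, and rewrite the defining relation $\erf{pic_csp_17}$ of $\naka$ as the equality of pairings $\kap\cir(\naka\oti\ida)\eq\bar\kap$, where $\bar\kap$ denotes the morphism on the \lhs\ of $\erf{pic_csp_17}$, i.e.\ the \emph{opposite} pairing obtained from $\kap$ by transporting its second argument into the first slot by means of $\tilde b_A$ and $d_A$; this transport makes sense precisely because sovereignty gives $\va\eq\av$. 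In the shorthand of the vector space case the relation reads $\kap(\naka y,x)\eq\kap(x,y)$. Since the isomorphism $\ikr$ of $\erf{pic_csp_04}$ turns composition into currying, a morphism $f\iN\Hom(X,A)$ is uniquely determined by $\kap\cir(f\oti\ida)$; it therefore suffices to verify each of the two desired equalities after composing $\kap$ with the relevant morphism in its first slot.

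For \textbf{unitality} I would pair $\naka\cir\eta$ in the first slot and apply the Nakayama relation once: $\kap\cir((\naka\cir\eta)\oti\ida)\eq\bar\kap\cir(\eta\oti\ida)$, which by the definition of $\bar\kap$ equals $\kap\cir(\ida\oti\eta)$ (the pairing ``$\kap(x,1)$''). By invariance $\erf{pic_csp_02}$ one has $\kap\cir(\ida\oti\eta)\eq\kap\cir(\eta\oti\ida)$ (both equal the associated counit; cf.\ the two descriptions in $\erf{epskap = td e x ikl}$), whence $\kap\cir((\naka\cir\eta)\oti\ida)\eq\kap\cir(\eta\oti\ida)$. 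Cancelling $\ikr$ by non-degeneracy yields $\naka\cir\eta\eq\eta$.

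For \textbf{multiplicativity}, the main step, I would pair $\naka\cir m$ in the first slot and run the categorical analogue of the classical chain $\kap(\naka(ab),c)\eq\kap(c,ab)\eq\kap(ca,b)\eq\kap(\naka b,ca)\eq\kap((\naka b)c,a)\eq\kap(\naka a,(\naka b)c)\eq\kap(\naka a\,\naka b,c)$, in which the Nakayama relation $\erf{pic_csp_17}$ and the invariance relation $\erf{pic_csp_02}$ are applied alternately, three times each. The outcome is $\kap\cir((\naka\cir m)\oti\ida)\eq\kap\cir((m\cir(\naka\oti\naka))\oti\ida)$, and cancelling $\ikr$ gives $\naka\cir m\eq m\cir(\naka\oti\naka)$. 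Since $\naka$ is already known to be invertible from $\erf{naka=ikrm ikl}$, this establishes that $\naka$ is a unital algebra \auto.

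The main obstacle is the duality bookkeeping hidden in the phrase ``apply the Nakayama relation'': each use of $\erf{pic_csp_17}$ re-routes an $A$-strand through a $\tilde b_A/d_A$ cap--cup pair, and one must invoke sovereignty---in the form $\va\eq\av$ together with $\wee f\eq\eEW f$ from $\erf{wf=fw}$ and the (co)evaluation identities $\erf{pic_csp_20}$---to slide these caps and cups past the products $m$ and to keep the orientations of the duals consistent through the chain. Once the graphical manipulations are arranged so that each of the six steps is either a clean application of invariance or a clean application of the Nakayama relation, the computation itself is routine; checking that the transports compose correctly across all six steps is the only genuinely delicate point.
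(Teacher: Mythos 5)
Your strategy is correct and would yield a complete proof, but it is a genuinely different packaging from the paper's. The paper establishes multiplicativity by proving the equivalent morphism identity $\naka^{-1}\cir m\cir(\naka\oti\ida)\eq m\cir(\ida\oti\naka^{-1})$ directly in the graphical calculus, using the right-module morphism property of $\ikr$, the relation $\ikr\eq\wee{(\ikl)}$, one application of the sovereignty relation \erf{vf=fv} to the product $m$, and the left-module morphism property of $\iklm$; unitality is then dismissed by the general observation that a multiplicative automorphism of a unital algebra is automatically unital. You instead transport the classical six-step pairing computation and cancel $\ikr$ by non-degeneracy at the end, and prove unitality by a separate pairing argument. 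Both work, and yours is closer to the textbook proof; the paper's has the advantage that invariance enters only through the already-established module-morphism properties of $\ikl$ and $\ikr$, so no new diagram gymnastics are needed.

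The one step you should not leave under the heading of ``routine bookkeeping'' is the reverse application of the Nakayama relation, i.e.\ the rewrites of the form $\kap(u,v)\eq\kap(\naka v,u)$, which you use twice (and implicitly once more in the unitality argument). Writing $\bar\kap$ for the left-hand side of \erf{pic_csp_17}, the forward relation is $\bar\kap\eq\kap\cir(\naka\oti\ida)$; the reverse direction is the substitution $\kap\eq\bar{\bar\kap}$, and the identity $\bar{\bar\kap}\eq\kap$ is \emph{not} true by definition. One has $\bar\kap\eq d_A\cir(\ikl\oti\ida)$ and hence $\Phi_{\bar\kap,\rm l}\eq\eEW{(\ikl)}$, so $\bar{\bar\kap}\eq d_A\cir(\eEW{(\ikl)}\oti\ida)$; this equals $\kap\eq d_A\cir(\ikr\oti\ida)$ precisely because sovereignty gives $\eEW{(\ikl)}\eq\wee{(\ikl)}\eq\ikr$ via \erf{wf=fw} and \erf{ikr=iklwee}. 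The same issue hides in your unitality step ``$\bar\kap\cir(\eta\oti\ida)\eq\kap\cir(\ida\oti\eta)$ by the definition of $\bar\kap$'': as actual morphisms the left side is $d_A\cir(({}^{\vee}(\eps_\kap))\oti\ida)$ while the right side is $d_A\cir(((\eps_\kap)^\vee)\oti\ida)$ by \erf{epskap = td e x ikl}, so the equality is an instance of ${}^{\vee\!\!}f\eq f^\vee$, not of the definition. Once these two points are made explicit the six local rewrites compose correctly and your chain closes.
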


\begin{proof}
Any morphism $\omega$ that is an \auto\ of $A$ as an associative algebra
is automatically also unital, i.e.\ satisfies $\omega\cir\eta\eq\eta$. 
\\
It is therefore sufficient to show that $\naka$ is compatible with the 
product of $A$. We demonstrate this property by showing that 
$\naka^{-1}\!\cir m\cir(\naka\oti\ida) \eq m\cir(\ida\oti\naka^{-1})$. 
Consider the following chain of equalities:
  \eqpid{pic_csp_22} {400} {45} {
\put(-40,-9){
  \put(2,48)     {$\naka^{-1}\!\circ m\circ (\naka\oti\ida) ~= $}
  \put(132,0)    {\IncludepicfuSt{22a}}
  \put(239,48)   {$ = $}
  \put(265,0)    {\IncludepicfuSt{22b}}
  \put(360,48)   {$ = $}
  \put(385,0)    {\IncludepicfuSt{22c}}
  \put(139,-10)    {\sse$A$}
  \put(161.8,-10)  {\sse$A$}
  \put(132.5,41.8) {$\ikl$}
  \put(205.9,69.7) {$\iklm$}
  \put(212,114)    {\sse$A$}
  \put(262,-10)    {\sse$A$}
  \put(284.8,-10)  {\sse$A$}
  \put(292,63.5)   {$\ikr$}
  \put(329,60)     {$\iklm$}
  \put(335,114)    {\sse$A$}
  \put(382,-10)    {\sse$A$}
  \put(407.2,-10)  {\sse$A$}
  \put(401,42)     {$\ikr$}
  \put(484,81)     {$\iklm$}
  \put(490,114)    {\sse$A$}
} }
The first of these follows by combining \erf{naka=ikrm ikl} with the
fact that $\ikr$ is a morphism of right $A$-modules and using the explicit form
\erf{rhol,rhor} of the right action of $A$ on $\av$; the second equality is a
consequence of $\ikr\eq\wee{(\ikl)}$; and the third expresses again the fact 
that $\ikr$ is a morphism of right $A$-modules.
\\
Next we apply the sovereignty relation \erf{vf=fv} to the product that is 
contained in the morphism on the \rhs\ of \erf{pic_csp_22}, thereby 
obtaining the \lhs\ of the following equality:
  \eqpic{pic_csp_23} {400} {49} {
\put(-20,-2){
  \put(0,0)     {\IncludepicfuSt{23a}}
  \put(153,49)  {$ = $}
  \put(180,0)   {\IncludepicfuSt{23b}}
  \put(293,49)  {$ =~ \iklm \circ \rho \circ (\ida \otimes \ikr)\,. $}
  \put(-3,-10)     {\sse$A$}
  \put(49.5,114)   {\sse$A$}
  \put(118.5,-10)  {\sse$A$}
  \put(43.7,82.2)  {$\iklm$}
  \put(112,44.4)   {$\ikr$}
  \put(237,-10)    {\sse$A$}
  \put(260,-10)    {\sse$A$}
  \put(187,114)    {\sse$A$}
  \put(180.7,69.5) {$\iklm$}
  \put(253.9,42)   {$\ikr$}
} }
Here $\rho$ the left action of $A$ on $\va$ as in \erf{rhol,rhor}.
Owing to the left-module morphism property of $\iklm$,
the right hand side of \erf{pic_csp_23} is, in turn, equal to 
$m\cir(\ida\oti(\iklm{\circ}\,\ikr)) \eq m\cir(\ida\oti\naka^{-1})$.
\end{proof}

According to Definition \ref{sFRO}, a \Fa\ $A$ is symmetric iff for some choice 
of $\kap$ the morphism $\naka_{\kap}$ is the identity morphism. As we will see,
this also means that $A$ is symmetric iff every \Naka\ of $A$ is an inner \auto.
To derive this characterization, we need to introduce some further terminology.
First notice that the set $\ao\,{:=}\,\Hom(\one,A)$ has an associative product
$\mo{:}\ \ao\Times\ao\To\ao$ given by convolution,
  $ \mo(a,b) \,{:=}\, m \cir (a \oti b) $;
the unit $\eta$ of $A$ acts as an identity element, 
$\mo(a,\eta)\eq a\eq \mo(\eta,a)$. Owing to the associativity 
of $m$ the algebra $A$ carries the structure of a bimodule over $\ao$, with 
the left and right action of $a\iN\ao$ given by
  \be
  \lo a := m\cir(a\oti\ida) \qquad{\rm and}\qquad
  \ro a := m\cir(\ida\oti a) \,,
  \ee
respectively. Note that the assignments $\lo{},\ro{}\,{:}~\ao\To\End(A)$ mapping
$a\iN\ao$ to $\lo a$ and $\ro a$, respectively, are the analogues of the left and 
right regular \rep\ of an ordinary algebra in a category of vector spaces -- for 
every $a\iN\ao$ the action $\lo a$ furnishes an endomorphism of $A$ as a right 
$A$-module, while $\ro a$ furnishes an endomorphism of $A$ as a left $A$-module.

Of particular interest is the subset of $\ao$ consisting of morphisms
that are invertible with respect to the product $\mo$. These form a group,
the group of units of $A$, which we will denote by $\ua$.
For $g\iN\ua$, $\lo g$ and $\ro g$ are \auto s of $A$ (as an object, and
also as a right and left $A$-module, respectively), with inverses $\lom g$ and
$\rom g$, respectively. Moreover, the composition
  \be
  \ad g := \lom g \circ \ro g = \ro g \circ \lom g
  \ee
with $g\iN\ua$ is an \auto\ of $A$ as an algebra. An automorphism of $A$ is
called an {\em inner \auto\/} iff it is of this particular form; clearly, the
inner \auto s form a group under composition, with 
$\ad g\cir\ad h\eq\ad{\mo(h,g)}$ and $\ad g^{\,-1}\eq\ad{g^{-1}}$.\,%
 \footnote{~Inner \auto s can be used to twist the left or right action of $A$
 on itself, and thereby play a prominent role in the study of the category of
 $A$-bimodules, e.g.\ (for algebras in abelian monoidal categories) for the 
 description of the Picard group of the bimodule category and its appearance
 in a Rosenberg-Zelinsky exact sequence \cite{vazh,fuRs11,bfrs}.}

The module morphism properties of $\ro a$ and $\lo a$ imply that if $\kap$ 
is an invariant pairing on $A$, then so are $\kap\cir(\ida\oti\ro a)$ and
$\kap\cir(\lo a\oti\ida)$ for any $a\iN\ao$. Moreover,
for $g\iN\ua$ the composition of any \auto\ of $A$ with $\ro g$ or $\lo g$ is
again an \auto; thus if $\kap$ is a \nondeg\ pairing, then so are 
$\kap\cir(\ida\oti\ro g)$ and $\kap\cir(\lo g\oti\ida)$. 
Conversely, we have

\begin{lemma}\label{lemkapkapp}
Any two invariant \nondeg\ pairings $\kap$ and $\kap'$ on an algebra $A$ 
in a rigid monoidal category differ by composition with an endomorphism 
of the form $\id_A\oti\ro g$ for some $g\iN\ua$.
\end{lemma}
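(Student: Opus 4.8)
The plan is to reduce the statement to the elementary fact that every endomorphism of $A$ as a left module over itself is right multiplication by an element of $\ao$. First I would record the setup. Since $\kap$ and $\kap'$ are both \nondeg\ and invariant, the equivalence between Definitions \ref{FB} and \ref{FC} proved above shows that the associated morphisms $\ikl\iN\Hom(A,\va)$ and $\ikpl\iN\Hom(A,\va)$, defined as in \erf{pic_csp_03} (for $\kap'$ the analogous construction), are \emph{isomorphisms of left $A$-modules} from $(A,m)$ to $(\va,\rhol)$: invariance is precisely the left-module property \erf{pic_csp_11}, and non-degeneracy is invertibility. Consequently the composite $\psi := \iklm \cir \ikpl \iN\End(A)$ is an automorphism of $A$ as a left $A$-module; in particular it is invertible and satisfies $\psi\cir m \eq m\cir(\ida\oti\psi)$.

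The key step is then to show that any such $\psi$ is of the form $\ro g$ with $g := \psi\cir\eta\iN\ao$. This is immediate from the left-module property together with unitality: $\ro g \eq m\cir(\ida\oti(\psi\cir\eta)) \eq m\cir(\ida\oti\psi)\cir(\ida\oti\eta) \eq \psi\cir m\cir(\ida\oti\eta) \eq \psi$. To see that $g$ actually lies in the group of units $\ua$, I would first note that $\psi^{-1}$ is again a left-module morphism (compose $\psi\cir m \eq m\cir(\ida\oti\psi)$ with $\psi^{-1}$ on the left and with $\ida\oti\psi^{-1}$ on the right), and hence equals $\ro{g'}$ with $g' := \psi^{-1}\cir\eta$. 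Using the composition rule $\ro a\cir\ro b\eq\ro{\mo(b,a)}$, which is a direct consequence of the associativity of $m$, together with $\ro\eta\eq\ida$ and the injectivity of $a\mapsto\ro a$ (which follows from $\ro a\cir\eta\eq a$), the relations $\psi\cir\psi^{-1}\eq\ida\eq\psi^{-1}\cir\psi$ translate into $\mo(g',g)\eq\eta\eq\mo(g,g')$. Thus $g$ is invertible with respect to $\mo$, i.e.\ $g\iN\ua$.

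Finally I would translate this back to the pairings. From \erf{d ikr = kap} one has $\kap\eq\tilde d_A\cir(\ida\oti\ikl)$ and, analogously, $\kap'\eq\tilde d_A\cir(\ida\oti\ikpl)$. Since $\psi\eq\ro g$ means $\ikpl\eq\ikl\cir\ro g$, one obtains $\kap' \eq \tilde d_A\cir(\ida\oti\ikpl) \eq \tilde d_A\cir(\ida\oti\ikl)\cir(\ida\oti\ro g) \eq \kap\cir(\ida\oti\ro g)$, which is exactly the claimed relation. I expect the only genuine content to be the middle step---the categorical version of ``endomorphisms of the regular left module are right multiplications'' and the verification that invertibility of $\psi$ forces $g$ into $\ua$; the rest is bookkeeping with the dictionary between pairings, the morphisms $\ikl$, and the left-module structure already assembled in Sections 2 and 3.
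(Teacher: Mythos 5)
Your proposal is correct and follows essentially the same route as the paper's own proof: both factor $\ikpl$ as $\ikl$ composed with a left-module automorphism of $A$ (your $\psi$ is the paper's $\sigl$), identify that automorphism as $\ro g$ with $g\eq\psi\cir\eta$ via unitality, and observe that invertibility of $\psi$ puts $g$ in $\ua$. Your verification that $g$ is a unit is slightly more spelled out than the paper's one-line remark, but the argument is the same.
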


\begin{proof}
If the pairings $\kap$ and $\kap'$ are \nondeg, then the corresponding 
morphisms $\ikl$ and $\ikpl$ in $\Hom(A,\va)$ are isomorphisms, and hence
$\ikpl\eq\ikl\cir\sigl$ for some \auto\ $\sigl$ of $A$.
Equivalently, $\kap$ and $\kap'$ are related by 
  \be
  \kap' = \kap\cir(\id_A\oti\sigl) \,.
  \ee
$\sigl$ is even an \auto\ of $A$ as a left $A$-module, because $\ikl$ and 
$\ikpl$ are. Rewriting $\sigl$ identically as $\sigl\cir m\cir(\ida\oti\eta)$ 
it follows in particular that 
  \be 
  \sigl = m \circ [\ida \oti (\sigl\cir\eta)] = \ro{\sigl\circ\eta} \,.
  \ee
Thus indeed $\kap' \eq \kap\cir (\id_A\oti\ro g)$, with $g\eq\sigl\cir\eta\,$.
\\
Finally, $g$ is invertible, with inverse $g^{-1}\eq\sigL^{-1}\cir\eta$.
\end{proof}

Note that only `left-nondegeneracy' of the pairings $\kap$ and $\kap'$
(and only left-rigidity of \C) enters the proof.
An analogous argument based on right-nondegeneracy (and right-rigidity)
shows that the two invariant \nondeg\ pairings are also related by 
$\kap'\eq\kap\cir(\sigr\oti\ida)$ with $\sigr\eq\lo{\sigr\circ\eta}$ for
some invertible right $A$-module morphism $\sigr\iN\End(A)$.

As a consequence of Lemma \ref{lemkapkapp} we have

\begin{prop}
Any two \Naka s of a \Fa\ $A$ in a sovereign monoidal category \,\C\ 
differ by composition with an inner \auto\ of $A$.
\end{prop}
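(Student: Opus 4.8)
The plan is to express the two \Naka s through the isomorphisms $\ikl$ and $\ikr$ of \erf{pic_csp_03} and \erf{pic_csp_04} and to follow how these transform when one pairing is traded for the other. Let $\kap$ and $\kap'$ be the two \nondeg\ invariant pairings underlying the two \Naka s, so that $\naka_\kap\eq\ikrm\cir\ikl$ and $\naka_{\kap'}\eq\ikprm\cir\ikpl$ by \erf{naka=ikrm ikl}. By Lemma~\ref{lemkapkapp} there is a unit $g\iN\ua$ with $\kap'\eq\kap\cir(\ida\oti\ro g)$, and by its right-handed analogue there is a unit $h\iN\ua$ with $\kap'\eq\kap\cir(\lo h\oti\ida)$. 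Inserting the first description into the definition of $\ikpl$ and the second into that of $\ikpr$, and sliding the endomorphisms $\ro g$ and $\lo h$ (which act on the input object $A$) out of the (co)evaluations, I obtain $\ikpl\eq\ikl\cir\ro g$ and $\ikpr\eq\ikr\cir\lo h$. Hence
  \be
  \naka_{\kap'} = \ikprm\cir\ikpl = \lom h\cir\ikrm\cir\ikl\cir\ro g = \lom h\cir\naka_\kap\cir\ro g \,.
  \ee

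Next I would exploit that $\naka_\kap$ is a unital algebra morphism (Proposition~\ref{Nakaalgmor}). From $\naka_\kap\cir m\eq m\cir(\naka_\kap\oti\naka_\kap)$ one reads off the commutation rule $\naka_\kap\cir\ro g\eq\ro{\naka_\kap\cir g}\cir\naka_\kap$, and since $\naka_\kap$ preserves the convolution product $\mo$ and the unit $\eta$ it restricts to a group automorphism of $\ua$, so that $\naka_\kap\cir g\iN\ua$. Substituting the commutation rule gives $\naka_{\kap'}\eq\lom h\cir\ro{\naka_\kap\cir g}\cir\naka_\kap$. Comparing with the definition $\ad k\eq\lom k\cir\ro k$ of an inner \auto, the two surviving factors collapse to $\ad{\naka_\kap\cir g}$ exactly when $h\eq\naka_\kap\cir g$; the whole proof therefore reduces to establishing this single equality.

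That equality is the heart of the matter and the step I expect to cause the most trouble, since it is where the two a priori independent units $g$ and $h$ must be tied together. To obtain it I would compare the two descriptions of the \emph{same} pairing $\kap'$: starting from $\kap\cir(\ida\oti\ro g)\eq\kap\cir(\lo h\oti\ida)$, I apply invariance of $\kap$ on each side and precompose with $\eta\oti\ida$, using $m\cir(\eta\oti\ida)\eq\ida$, which collapses the identity to $\kap\cir(\ida\oti g)\eq\kap\cir(h\oti\ida)$ in $\Hom(A,\one)$. On the other hand, the defining relation \erf{pic_csp_17} of the Nakayama morphism, with $g$ fed into the appropriate leg, reads $\kap\cir(\ida\oti g)\eq\kap\cir((\naka_\kap\cir g)\oti\ida)$. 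Equating the two right hand sides yields $\kap\cir((\naka_\kap\cir g)\oti\ida)\eq\kap\cir(h\oti\ida)$, and since $\kap\eq d_A\cir(\ikr\oti\ida)$ by \erf{d ikr = kap} with $\ikr$ an isomorphism, left non-degeneracy of $\kap$ cancels the common $\kap$-slot and forces $\naka_\kap\cir g\eq h$.

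Putting the pieces together then gives $\naka_{\kap'}\eq\ad{\naka_\kap\cir g}\cir\naka_\kap$, so the two \Naka s indeed differ by composition with the inner \auto\ $\ad{\naka_\kap\cir g}$ associated with the unit $\naka_\kap\cir g\iN\ua$ that encodes the difference between $\kap$ and $\kap'$.
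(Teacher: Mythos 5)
Your proof is correct, but it takes a genuinely different route from the paper's. The paper uses only the unit $g$ of Lemma \ref{lemkapkapp} (so $\ikpl\eq\ikl\cir\ro g$), computes $\ikpr\eq\wee{(\ikpl)}\eq\ro g^{\vee}\cir\ikr$, and then converts $\romv g$ into $\ikl\cir\lom g\cir\iklm$ by means of sovereignty, the second identity in \erf{pic_csp_05} and the right-module morphism property of $\ikr$; this lands directly on $\naka'\eq\naka\cir\ad g$ without ever invoking Proposition \ref{Nakaalgmor} or a second unit $h$. You instead use both the left- and right-handed versions of the lemma to get $\naka'\eq\lom h\cir\naka\cir\ro g$ cheaply, and then need two further inputs: the algebra-morphism property of $\naka$ (Proposition \ref{Nakaalgmor}) to commute $\naka$ past $\ro g$, and the identity $h\eq\naka\cir g$ --- which the paper records only \emph{after} its proof, as a consequence of \erf{nakad}, whereas you must establish it independently. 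Your derivation of $h\eq\naka\cir g$ is sound; be aware, though, that identifying what one actually gets by feeding $g$ into the first leg of \erf{pic_csp_17}, namely $d_A\cir((\ikl\cir g)\oti\ida)$, with $\kap\cir(\ida\oti g)\eq\tilde d_A\cir(\ida\oti(\ikl\cir g))$ is not a pure zig-zag manipulation: it rests on $d_A\cir(\phi\oti\ida)\eq\tilde d_A\cir(\ida\oti\phi)$ for $\phi\iN\Hom(\one,\va)$, which follows from \erf{vf=fv} but not from rigidity alone. Your final formula $\naka'\eq\ad{\naka\cir g}\cir\naka$ agrees with the paper's $\naka'\eq\naka\cir\ad g$ precisely because $\naka$ is an algebra morphism. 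What each route buys: yours is more element-like and avoids the graphical step \erf{pic_csp_18}, but it consumes Proposition \ref{Nakaalgmor} as an input --- harmless here, since that proposition is proved independently beforehand, but it would become circular if one instead wished to deduce the algebra-morphism property of $\naka$ from the present proposition, as the paper sketches in the paragraph following its proof.
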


\begin{proof}
Recall from the proof of the lemma that
  \be
  \ikpl = \ikl \circ \ro g
  \labl{ikpl=ikl ro g}
for some $g\iN\ua$. Together with \erf{ikr=iklwee} it then follows that
$\ikpr \eq \wee{(\ikpl)} \eq \ro g^{\vee} \cir \wee{(\ikl)}$ and thus
  $ \ikprm  
  \eq \ikrm \cir \romv g $.
Now by using sovereignty together with the second identity in \erf{pic_csp_05}
one can rewrite the latter equality as
  \eqpic{pic_csp_18} {190} {44} {
\put(0,2){
  \put(0,47)    {$ \ikprm \,=\, \ikrm \circ {}^{\vee\!}\rom g \,= $}
  \put(130,0)   {\IncludepicfuSt{18}}
  \put(137.3,103)  {\sse$A$}
  \put(198.4,-10)  {\sse$\av$}
  \put(130.6,46.2) {$\ikrm$}
  \put(161.8,60.9) {$\ikr$}
  \put(193.5,46.2) {$\iklm$}
  \put(164.7,34.1) {$r_{\!g^{\!-\!1}}$}
} }
Furthermore, since $\ikr$ is a morphism of right $A$-modules, for any 
$h\iN\ao$ one has 
  \be
  \ikr \circ \ro h = (d_A\oti\idav) \circ (\ikr\oti\lo h\oti\idav)
  \circ (\ida\oti b_A) \,,
  \ee
and as a consequence we can rewrite \erf{pic_csp_18} as
  $ \ikprm \eq \ikrm \cir \ikl^{} \cir \lom g \cir \iklm $.
When combined with \erf{naka=ikrm ikl} and \erf{ikpl=ikl ro g}, we therefore
conclude that the \Naka s $\naka\,{\equiv}\,\naka_{\kap}$ and
$\naka'\,{\equiv}\,\naka_{\kap'}$ are related by
  \be
  \naka' = \ikprm \circ \ikpl = \ikrm \circ \romv g \circ \ikl \circ \ro g
  = \ikrm \circ \ikl \circ \lom g \circ \ro g = \naka \circ \ad g \,,
  \labl{nakad}
thus proving the claim.  
\end{proof}

It follows in particular that when selecting a \FAs\ $\eps$ on $A$ we can write
any \Naka\ $\naka$ of $A$ in the form $\naka\eq\naka_{\kap_\eps}{\circ}\,\ad g$
with some $g\iN\ua$. Thus combining the statements that $\naka_{\kap_\eps}$ is 
an algebra morphism (see the proposition in Section 4 of \cite{jf29}) and that 
any inner automorphism of $A$ is an algebra morphism as well (see above) 
provides an alternative derivation of Proposition \ref{Nakaalgmor}.
Also note that by combining \erf{nakad} with the results about the morphisms
$g\eq\sigl\cir\eta$ and $h\eq\sigr\cir\eta$ obtained above one finds that 
$\naka_\kap\cir\lo g \eq \lo h\cir\naka_\kap$ which, in turn, by the algebra
morphism property of $\naka_\kap$ implies that $h\eq\naka_\kap\cir g$.

Further, if there exists a \FBs\ $\kap$ on $A$ such that the associated \Naka\
$\naka_\kap$ is inner, say $\naka_\kap\eq\ad h$, then one has 
$\naka_{\kap'}\eq\ida$ for the \FBs\ $\kap'\,{:=}\,\kap\cir(\lom h\oti\ida)$ on 
$A$, and so $A$ is symmetric. Thus indeed a \Fa\ $A$ is symmetric iff every
\Naka\ of $A$ is inner.

\medskip

To conclude, we combine Lemma \ref{lemkapkapp}, and the remarks preceding
it, with the formulas \erf{pic_csp_03} and \erf{deltakapepskapdef} to arrive at

\begin{cor}
Let $A\eq(A,m,\eta)$ be an algebra in a rigid monoidal category.
\\[3pt]
{\rm(i)}~~Let $\eps,\kap,\irl$ be $\eps$-, $\kap$- and \FCs s on $A$,
respectively, and let $g,h\iN\ua$. Then also $\,\eps\cir\ro g\cir\lo h$,
$\kap\cir(\lo g\oti\ro h)$ and $\,\irl\cir\ro g$ 
are $\eps$-, $\kap$- and \FCs s on $A$, respectively.
\\[3pt]
{\rm(ii)}~\,Any two triples $(\eps,\kap,\irl)$ and $(\eps',\kap',\irl')$
of $\eps$-, $\kap$- and \FCs s on $A$ are related as in {\rm(i)} for some 
choice of $g,h\iN\ua$.
\end{cor}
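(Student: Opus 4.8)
The plan is to treat the three structures ($\eps$-, $\kap$-, \FCs) in a unified way using the dictionary already established in Sections 2 and 3, and to derive everything from Lemma \ref{lemkapkapp}. The key observation is that a \FBs\ is the ``central'' object here: an $\eps$-structure determines a $\kap$-structure via $\kap_\eps\eq\eps\cir m$ (see \erf{kapepsdef}), and a \FCs\ determines a $\kap$-structure via $\kap_\irl\eq\tilde d_A\cir(\ida\oti\irl)$ (see \erf{defkapvarphi}); both correspondences are bijective. So the whole statement reduces to understanding how invariant \nondeg\ pairings on $A$ transform, which is precisely the content of Lemma \ref{lemkapkapp} and the remarks following it.

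For part (i), I would argue separately for each structure but lean on these correspondences. For the pairing $\kap$, invariance and non-degeneracy of $\kap\cir(\lo g\oti\ro h)$ follow directly from the module-morphism properties of $\lo g$ and $\ro h$ and from the fact that composing an \auto\ with $\lo g$ or $\ro h$ (for $g,h\iN\ua$) again yields an \auto, exactly as stated in the paragraph preceding Lemma \ref{lemkapkapp}. For the \FCs, that $\irl\cir\ro g$ is again a left-module isomorphism follows because $\ro g$ is an \auto\ of $A$ as a left $A$-module (noted in Section 5) and $\irl$ is a left-module morphism, with the composite still invertible. For the $\eps$-structure the cleanest route is to pass through the pairing: one checks $\kap_{\eps\circ\ro g\circ\lo h}\eq\kap_\eps\cir(\lo g\oti\ro h)$ by using $\kap_\eps\eq\eps\cir m$ together with $\ro g\eq m\cir(\ida\oti g)$ and $\lo h\eq m\cir(h\oti\ida)$ and associativity, so that the already-established transformation law for $\kap$ transports back to an $\eps$-structure via the bijection of Proposition numbered after \erf{kapepsdef}.

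For part (ii) the strategy is to reduce to pairings. Given two triples, first observe that $\kap$ and $\kap'$ are both invariant \nondeg\ pairings, so by Lemma \ref{lemkapkapp} there is $g\iN\ua$ with $\kap'\eq\kap\cir(\ida\oti\ro g)$, and by the right-handed analogue stated immediately after the lemma there is $h\iN\ua$ with $\kap'\eq\kap\cir(\lo h\oti\ida)$; combining these one gets $\kap'\eq\kap\cir(\lo h\oti\ro g)$ for a suitable pair, matching the form in (i). The remaining task is to transport this relation to the $\eps$- and \FCs\ levels through the bijections: since $\eps\mapsto\kap_\eps$ and $\irl\mapsto\kap_\irl$ are bijective and intertwine the respective transformations (as verified in (i)), the relations $\eps'\eq\eps\cir\ro{g'}\cir\lo{h'}$ and $\irl'\eq\irl\cir\ro{g''}$ follow for the correspondingly determined units.

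The main obstacle is bookkeeping rather than conceptual: one must check that the \emph{same} group elements $g,h\iN\ua$ can be used consistently across all three structures, i.e.\ that the bijections genuinely intertwine the two-sided transformations $\kap\mapsto\kap\cir(\lo g\oti\ro h)$, $\eps\mapsto\eps\cir\ro g\cir\lo h$ and $\irl\mapsto\irl\cir\ro g$. This requires tracking on which tensor factor each unit acts under the correspondences \erf{kapepsdef} and \erf{defkapvarphi}, and using the relation $h\eq\naka_\kap\cir g$ noted at the end of Section 5 to reconcile the left- and right-handed descriptions coming from the two versions of Lemma \ref{lemkapkapp}. Once the indexing is pinned down, each individual verification is a short graphical computation using associativity and the duality axioms, so no single step is deep; the care lies in the consistent choice of units.
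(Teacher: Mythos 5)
Your route is the same as the paper's: the paper offers no more than the instruction to combine Lemma \ref{lemkapkapp} (and the remarks preceding it) with the correspondence formulas \erf{kapepsdef}, \erf{deltakapepskapdef} and \erf{pic_csp_03}/\erf{defkapvarphi}, which is exactly the reduction to pairings that you carry out, so the proposal is essentially correct and in the paper's spirit. One concrete slip: the transport identity you write for the counit has the two units interchanged --- associativity gives $\kap_{\eps\circ\ro g\circ\lo h}=\kap_\eps\cir(\lo h\oti\ro g)$, not $\kap_\eps\cir(\lo g\oti\ro h)$. This is harmless for part (i), since $g,h$ range over all of $\ua$ and each of the three claims there is independent, but it is precisely the factor-tracking issue you defer to ``bookkeeping'' in part (ii); note also that in (ii) the three structures in a triple are not assumed to correspond to one another under the bijections, so the relations for $\eps$, $\kap$ and $\irl$ each determine their own units via Lemma \ref{lemkapkapp} and its right-handed analogue, and no single pair $(g,h)$ need serve all three simultaneously.
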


\bigskip

\noindent{\sc Acknowledgments:}\\
We thank A.A.\ Davydov, V.\ Hinich, C.\ Schweigert and A.\ Stolin for helpful 
discussions, and L.\ Kadison and V.\ Ostrik for a correspondence.
JF is partially supported by VR under project no.\ 621-2006-3343.

 \newpage

 \newcommand\wb{\,\linebreak[0]} \def\wB {$\,$\wb}
 \newcommand\Bi[2]    {\bibitem[#2]{#1}}
\newcommand\BOOK[4]  {{\sl #1\/} ({#2}, {#3} {#4})}
\newcommand\inBO[9]  {{\em #8}, in:\ {\sl #1}, {#2}\ ({#3}, {#4} {#5}),
                     p.\ {#6--#7} {{\tt [#9]}}}
\newcommand\inBo[8]  {{\em #8}, in:\ {\sl #1}, {#2}\ ({#3}, {#4} {#5}), p.\ {#6--#7}}
\newcommand\JO[6]    {{\em #6}, {#1} {#2} ({#3}), {#4--#5}}
\newcommand\J[7]     {{\em #7}, {#1} {#2} ({#3}), {#4--#5} {{\tt [#6]}}}
\newcommand\Pret[2]  {{\em #2}, pre\-print {\tt #1}}
 \def\jf    {J.\ Fuchs}
 \def\adma  {Adv.\wb Math.}
 \def\anma  {Ann.\wb Math.}        
 \def\apcs  {Applied\wB Cate\-go\-rical\wB Struc\-tures} 
 \def\coma  {Con\-temp.\wb Math.}
 \def\comp  {Com\-mun.\wb Math.\wb Phys.}
 \def\fiic  {Fields\wB Institute\wB Commun.}
 \def\jktr  {J.\wB Knot\wB Theory\wB and\wB its\wB Ramif.}
 \def\jnmp  {J.\wB Non-li\-ne\-ar\wB Math.\wb Phys.} 
 \def\joal  {J.\wB Al\-ge\-bra}
 \def\jomp  {J.\wb Math.\wb Phys.}
 \def\jpaa  {J.\wB Pure\wB Appl.\wb Alg.}
 \def\jram  {J.\wB rei\-ne\wB an\-gew.\wb Math.}
 \def\kthe  {K-The\-o\-ry}
 \def\marl  {Math.\wb Res.\wb Lett.}
 \def\nupb  {Nucl.\wb Phys.\ B}
 \def\slnm  {Sprin\-ger\wB Lecture\wB Notes\wB in\wB Mathematics}
 \def\taac  {Theo\-ry\wB and\wB Appl.\wB Cat.}
 \def\topo  {Topology}
 \def\AMS    {{American Mathematical Society}}
 \def\CUP    {{Cambridge University Press}}
 \def\EMS    {{European Mathematical Society}}
 \def\SV     {{Sprin\-ger Ver\-lag}}
 \def\Ca     {{Cambridge}}
 \def\NY     {{New York}}
 \def\PR     {{Providence}}

\small

\end{document}